\newtheorem{Definition}[equation]{Definition}
\newtheorem{Theorem}[equation]{Theorem}
\newtheorem{Proposition}[equation]{Proposition}
\newtheorem{Lemma}[equation]{Lemma}
\newtheorem{Corollary}[equation]{Corollary}
\newtheorem{Remark}[equation]{Remark}
\newtheorem{Example}[equation]{Example}
\newcommand{\pres}[2]{\langle #1 \mid #2 \rangle}
\DeclareMathOperator{\Aut}{Aut}
\newcommand{\squarea}[3]{
\filldraw #1 circle (#2*0.02cm);
\filldraw #1+(#2,0) circle (#2*0.02cm);
\filldraw #1+(0,#2) circle (#2*0.02cm);
\filldraw #1+(#2,#2) circle (#2*0.02cm);
\draw[red] #1 -- ++(#2,0) ++(0,#2) -- ++(-#2,0);
\draw[blue] #1 ++(#2,0) -- ++(0,#2) ++(-#2,0) -- ++(0,-#2);
\draw[green!80!black] #1 -- ++(-#3,-#3);
\draw[green!80!black] #1 ++(#2,0) -- ++(#3,-#3);
\draw[green!80!black] #1 ++(0,#2) -- ++(-#3,#3);
\draw[green!80!black] #1 ++(#2,#2) -- ++(#3,#3);
}
\newcommand{\squareb}[3]{
\filldraw #1 circle (#2*0.02cm);
\filldraw #1+(#2,0) circle (#2*0.02cm);
\filldraw #1+(0,#2) circle (#2*0.02cm);
\filldraw #1+(#2,#2) circle (#2*0.02cm);
\draw[blue] #1 -- ++(#2,0) ++(0,#2) -- ++(-#2,0);
\draw[red] #1 ++(#2,0) -- ++(0,#2) ++(-#2,0) -- ++(0,-#2);
\draw[green!80!black] #1 -- ++(-#3,-#3);
\draw[green!80!black] #1 ++(#2,0) -- ++(#3,-#3);
\draw[green!80!black] #1 ++(0,#2) -- ++(-#3,#3);
\draw[green!80!black] #1 ++(#2,#2) -- ++(#3,#3);
}
\def\thm@space@setup{
	\thm@preskip=\parskip \thm@postskip=0pt
}
\title{Automorphisms of geometric structures associated to Coxeter groups}
\author{Graham White}
\begin{document}
\maketitle

\begin{abstract}
In this paper, we consider the automorphism groups of the Cayley graph with respect to the Coxeter generators and the Davis complex of an arbitrary Coxeter group. We determine for which Coxeter groups these automorphism groups are discrete. In the case where they are discrete, we express them as semidirect products of two obvious families of automorphisms. This extends a result of Haglund and Paulin.
\end{abstract}

\section{Introduction}

Given a Coxeter system $(W,S)$, $S = \{s_i\}_{i \in I}$, with the order of each $s_is_j$ being $m_{ij} \in \{2, 3, 4, \dots, \} \cup \{\infty\}$, the corresponding \emph{defining diagram} is the graph with vertex set $\{s_i\}_{i \in I}$, and for any two vertices $s_i$ and $s_j$, an edge joining $s_i$ and $s_j$ if and only if $m_{ij}$ is finite. We label the edge between $s_i$ and $s_j$ with $m_{ij}$. As in \cite{HP}, the defining diagram is said to be \emph{flexible} if there is a vertex $s \in S$ and a nontrivial label-preserving automorphism $\phi$ of the diagram such that $\phi$ fixes $s$ and fixes each vertex connected to $s$ by an edge.

Let $\Gamma = \Gamma(W,S)$ be the Cayley graph of $W$ with respect to the generating set $S$. A \emph{left-multiplication automorphism} is an automorphism $L_w$ of $\Gamma(W,S)$ given by $L_w(x) = wx$, for some $w \in W$ and each $x \in W$. A \emph{diagram automorphism} is an automorphism of $\Gamma(W,S)$ induced by an automorphism of the Coxeter diagram for $(W,S)$. More detailed background on these geometric structures and automorphisms is presented in Section \ref{sec:background}. The topology on the group $\Aut(\Gamma)$ is described in \cite{Survey}. 

Our main result is the following theorem.

\begin{Theorem}
\label{the:main}
Let $\Gamma$ be the Cayley graph of a Coxeter system $(W,S)$ with respect to the generating set $S$, where $S$ is finite. 
\begin{enumerate}[label=\alph{enumi}),ref=\ref{the:main}\alph{enumi})]
\item \label{the:maingen} Any element of $\Aut(\Gamma)$ is the composition of a left-multiplication automorphism and a diagram automorphism if and only if the defining diagram of $(W,S)$ is not flexible.
\item \label{the:maindiscrete} $\Aut(\Gamma)$ is a discrete group if and only if the defining diagram of $(W,S)$ is not flexible.
\end{enumerate}
\end{Theorem}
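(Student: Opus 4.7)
The strategy is to reduce both parts of Theorem \ref{the:main} to understanding the pointwise stabilizer $K\subseteq\Aut(\Gamma)$ of the finite set $\{1\}\cup S$, and to prove that $K$ is trivial if and only if the defining diagram is not flexible. Given $\alpha\in\Aut(\Gamma)$, pre-composing with $L_{\alpha(1)^{-1}}$ reduces to the case $\alpha(1)=1$; then $\alpha$ permutes the neighbor set of $1$, which is exactly $S$, by some $\sigma$. Since the shortest closed edge-path through $1$ alternating edges labeled $s_i$ and $s_j$ has length $2m_{ij}$, and graph automorphisms preserve cycle lengths through fixed vertices, applying the observation to both $\alpha$ and $\alpha^{-1}$ yields $m_{\sigma(s_i)\sigma(s_j)}=m_{ij}$, so $\sigma$ extends to a diagram automorphism $\tilde\sigma$ of $\Gamma$ and $\tilde\sigma^{-1}\alpha\in K$. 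Part (a) is thus equivalent to the statement ``$K=\{1\}$ iff the diagram is not flexible''. For (b), triviality of $K$ means the pointwise stabilizer of the finite set $\{1\}\cup S$ is trivial and so $\Aut(\Gamma)$ is discrete; conversely, non-triviality of $K$ will give non-discreteness via the construction below.

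For the ``flexible implies $K$ nontrivial'' direction, given flexibility data $(s,\phi)$ with $\phi$ a nontrivial label-preserving diagram automorphism fixing $\{s\}\cup\mathrm{star}(s)$, I would construct $\psi\in K$ by twisting the global diagram automorphism $\tilde\phi$ across the wall $H_s$ of $\Gamma$ fixed by the reflection $s$. The wall $H_s$ separates $W$ into halfspaces $W^\pm$ with $1\in W^+$, and the generators whose walls meet $H_s$ are exactly those $r$ with $m_{sr}<\infty$, i.e.\ precisely $\{s\}\cup\mathrm{star}(s)$ — the set on which $\phi$ acts trivially. Setting $\psi$ to be the identity on $W^+$ and to apply $\tilde\phi$, suitably interpreted, on $W^-$ is consistent along $H_s$ because $\phi$ fixes the generators crossing $H_s$, and extends to a graph automorphism of $\Gamma$ that fixes $\{1\}\cup S$ pointwise but is nontrivial. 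Since the resulting $\psi$ acts as the identity on an entire halfspace, for any finite $F\subset W$ one can choose $w\in W$ with $F\subset L_w(W^+)$; then $L_w\psi L_w^{-1}$ is a nontrivial element of $\Aut(\Gamma)$ fixing $F$ pointwise, which gives non-discreteness.

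For the converse, let $\psi\in K$ be nontrivial and pick $w\in W$ of minimal length moved by $\psi$. Write $w=w's$ reduced with $\ell(w')=\ell(w)-1$; by minimality $w'$ is fixed, and $\psi(w)=w's'$ for some $s'\in S$, $s'\neq s$, with $\ell(w's')=\ell(w)$. The induced permutation $\tau$ of the forward neighbor labels $S\setminus\mathrm{Des}_R(w')$ sends $s$ to $s'$, and running the same cycle-length argument at $w'$ forces $\tau$ to preserve the labels $m_{rr'}$ on the subdiagram spanned by $S\setminus\mathrm{Des}_R(w')$. Combining this with the fact that $\psi$ fixes the backward neighbors of $w'$ — which, through cycles at $w'$ bridging ascending and descending generators, control how $\tau$ interacts with $\mathrm{Des}_R(w')$ — one extends $\tau$ by the identity on $\mathrm{Des}_R(w')$ to obtain a nontrivial label-preserving diagram automorphism of the whole diagram fixing $s$ and each element of $\mathrm{star}(s)$, contradicting non-flexibility. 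Hence $\psi$ is trivial, and $K=\{1\}$.

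The main obstacle is this last step: the local data $\tau$ only lives on the forward neighbors of $w'$, and propagating it to a \emph{global} diagram automorphism with the correct fixed set demands careful control of the cycles of length $2m_{ij}$ through $w'$ that involve generators on both sides of $\mathrm{Des}_R(w')$. Verifying that the naive extension by the identity is actually a diagram automorphism, and that its fixed set contains $\{s\}\cup\mathrm{star}(s)$, is where the non-flexibility hypothesis does the real work and is expected to be the bulk of the technical content of the paper.
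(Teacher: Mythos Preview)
Your overall strategy---reducing to the stabilizer $K$ of $\{1\}\cup S$, analyzing local permutations via the $2m_{ij}$-cycles, and handling the two directions separately---matches the paper's. But there are concrete problems in both directions as you have written them.

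In the ``not flexible $\Rightarrow K=\{1\}$'' direction, your final claim is wrong as stated: the permutation $\tau$ you build at $w'$ sends $s$ to $s'\neq s$, so the extended permutation \emph{does not} fix $s$, and certainly not $\mathrm{star}(s)$. What is true is that it fixes each descent $t\in\mathrm{Des}_R(w')$, and---via the essential-cycle argument you gesture at---also each $r$ with $m_{tr}<\infty$, so it fixes some descent together with its star. That is the correct witness to flexibility. The paper organizes this more cleanly by never passing to a minimal moved element: it proves directly (Lemma~\ref{lem:samelabels}, Corollary~\ref{cor:adjsame}) that for \emph{any} edge $\{w,ws\}$ the ratio $\pi_{ws}^{-1}\pi_w$ of local permutations is a diagram automorphism fixing $s$ and every $t$ with $m_{st}<\infty$. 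Non-flexibility then forces $\pi_w=\pi_{ws}$ at every edge, so the local permutation is constant on the connected graph $\Gamma$ and the automorphism globally permutes labels. Your minimal-element argument can be repaired using exactly this adjacent-vertex comparison (applied to $w'$ and $w't$ for a descent $t$), but you have not isolated it, and your identification of the fixed vertex is off.

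In the ``flexible $\Rightarrow K\neq\{1\}$'' direction, the wall construction ``identity on $W^+$, apply $\tilde\phi$ on $W^-$'' does not obviously glue to a graph automorphism: an edge $\{w,wr\}$ crossing $H_s$ has $wr=sw$, and you would need $w$ adjacent to $\tilde\phi(sw)=s\tilde\phi(w)$, i.e.\ $w^{-1}\tilde\phi(w)\cdot\phi(r)\in S$, which fails whenever $\tilde\phi(w)\neq w$. Your parenthetical ``suitably interpreted'' concedes this. The paper's construction is different and genuinely more delicate: it defines $\Psi_\phi$ on reduced words by applying $\phi$ only to the prefix \emph{before the first occurrence of $s$}, and then uses Tits' solution to the word problem (Theorem~\ref{thm:wordprob}) to show this is well-defined on $W$ and is a graph automorphism. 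The key point making this work is that any $m$-operation involving $s$ uses only letters fixed by $\phi$, so the ``first $s$'' boundary is respected. Your halfspace picture is the right intuition, but the actual automorphism is not a global $\tilde\phi$ on one side; it is a position-dependent twist, and the verification requires the word-level argument the paper carries out.
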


\begin{Corollary}
\label{cor:mainsemidirect}
The defining diagram of the Coxeter system $(W,S)$ is not flexible if and only if the automorphism group $\Aut(\Gamma(W,S))$ can be written as a semi-direct product $$\Aut(\Gamma) = W \rtimes D$$ where $W$ and $D$ are the groups of left-multiplication and diagram automorphisms of $\Gamma$ respectively.
\end{Corollary}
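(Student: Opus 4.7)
The plan is to bootstrap Theorem~\ref{the:main}\ref{the:maingen} into the semidirect-product statement. By that part of the theorem, non-flexibility is equivalent to the set-theoretic equality $\Aut(\Gamma) = W \cdot D$, so the real content of the corollary is only the promotion of this factorisation into a genuine internal semidirect product (and then the easy reversal in the other direction).

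For the forward implication, assume the diagram is not flexible, so every $\alpha \in \Aut(\Gamma)$ factors as $\alpha = L_w \circ \phi$. First I would verify $W \cap D = \{1\}$: any diagram automorphism fixes the identity vertex of $\Gamma$ (it is induced by a label-preserving permutation of $S$, which extends to a group automorphism of $W$ fixing $1$), while a left-multiplication $L_w$ fixes $1$ only when $w = 1$. Next I would show normality of $W$ in $\Aut(\Gamma)$. Because $\Aut(\Gamma) = WD$, it suffices to check $\phi W \phi^{-1} \subseteq W$ for each $\phi \in D$. Writing $\phi$ as the group automorphism of $W$ induced by the generator permutation, one has
\[
(\phi \circ L_w \circ \phi^{-1})(x) = \phi(w \cdot \phi^{-1}(x)) = \phi(w) \cdot x = L_{\phi(w)}(x),
\]
so conjugation by $\phi$ sends $L_w$ to $L_{\phi(w)} \in W$. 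Combined with $W \cap D = \{1\}$ and the factorisation $\Aut(\Gamma) = WD$, the standard criterion for an internal semidirect product gives $\Aut(\Gamma) = W \rtimes D$.

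The converse is immediate: if $\Aut(\Gamma)$ admits a description as $W \rtimes D$ with the stated factors, then set-theoretically $\Aut(\Gamma) = W \cdot D$, so every automorphism is a composition of a left-multiplication and a diagram automorphism, and Theorem~\ref{the:main}\ref{the:maingen} forces the defining diagram not to be flexible. The only mild subtlety, and the place I would be most careful, is the identification of a diagram automorphism with a group automorphism of $W$ needed for the conjugation calculation; this uses the universal property of the Coxeter presentation together with the fact that label-preserving symmetries of the diagram respect the exponents $m_{ij}$, and should be covered by the background material of Section~\ref{sec:background}.
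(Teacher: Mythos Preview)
Your proposal is correct and follows essentially the same route as the paper: both derive the corollary directly from Theorem~\ref{the:maingen}, with the paper simply asserting that $W \rtimes D$ is always a subgroup of $\Aut(\Gamma)$ and then observing that this subgroup equals the whole group precisely in the non-flexible case. You have just unpacked that assertion by explicitly verifying $W \cap D = \{1\}$ and normality of $W$ (neither of which actually depends on non-flexibility, so your argument in fact also establishes the paper's standing claim that $W \rtimes D$ sits inside $\Aut(\Gamma)$ in general).
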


We also show that Theorem \ref{the:main} and Corollary \ref{cor:mainsemidirect} remain true if $\Aut(\Gamma)$ is replaced by the automorphism group of the Davis complex for $(W,S)$. For the definition and properties of the Davis complex, see \cite{Davis}. 

In the case where the defining diagram is not flexible, and thus the automorphism group is discrete, these results were obtained by Haglund and Paulin as Theorem 5.12 of \cite{HP}, although our proof is different. Note that the defining diagrams of finite and affine Coxeter groups are not flexible. In the case that the defining diagram is flexible, \cite{HP} treated only word-hyperbolic Coxeter groups, while our method proves the result for all such Coxeter groups. An example of a Coxeter group which has flexible defining diagram and is not word-hyperbolic is $(D_{\infty} \times D_{\infty}) \ast D_{\infty}$. A discussion of word-hyperbolicity for Coxeter groups is given in Chapter 12 of \cite{Davis}.

Our proof of Theorem \ref{the:main} appears in Section \ref{sec:proofs}. We will prove Theorem \ref{the:maingen} by considering the cycles in the Cayley graph which correspond to the defining relators of the Coxeter system, and showing that automorphisms of the Cayley graph preserve these cycles. This will allow us to show that when the defining diagram is not flexible, automorphisms of the Cayley graph permute the edge labels in the same way at each vertex, and hence must be compositions of diagram automorphisms and left-multiplication automorphisms. When the defining diagram is flexible, we will explicitly extend a suitable nontrivial automorphism of the defining diagram to an automorphism of the Cayley graph which is not a composition of left-multiplication and diagram automorphisms. An infinite collection of such automorphisms which fix the identity vertex shows that the automorphism group is nondiscrete in this case. The remaining direction of Theorem \ref{the:maindiscrete} together with Corollary \ref{cor:mainsemidirect} will follow from intermediate results. 

\subsection*{Acknowledgements}

I would like to thank my supervisor, Anne Thomas, for her invaluable guidance and advice throughout this project. I would also like to thank Bob Howlett and Geordie Williamson for useful discussions.

\section{Background}
\label{sec:background}

In this section, we recall several definitions and results concerning Coxeter groups which will be used in our proof. We mostly follow Davis \cite{Davis}. In Section \ref{sec:cox}, we define Coxeter systems and state an important result about reduced words. In Section \ref{sec:cay} we define the Cayley graph and discuss its automorphisms. 

\subsection{Coxeter systems, defining diagrams and flexibility}
\label{sec:cox}

\label{def:cox}
A group $W$ is a \emph{Coxeter group} if it has a presentation of the form 
\begin{equation}
\label{eqn:coxpres}
W = \pres{\{s_i\}_{i \in I}}{\{s_i^2\}_{i \in I} \cup \{(s_is_j)^{m_{ij}}\}_{i,j \in I}} \text{, where } m_{ij} \in \{2,3, \dots\} \cup \{\infty\}
\end{equation}
and $I$ is a finite indexing set. Such a presentation is a \emph{Coxeter presentation}. Here, when $m_{ij}$ is equal to $\infty$, the product $s_is_j$ has infinite order, in which case we will omit $(s_is_j)^{m_{ij}}$ from our list of relators. The $(s_is_j)^{m_{ij}}$ for $m_{ij}$ finite are \emph{defining relators}. If $W = \pres{S}{R}$ is a Coxeter presentation for $W$, then $W$ together with its generating set $S$ is a \emph{Coxeter system}, $(W,S)$.

Given a Coxeter group $W = \pres{S}{R}$, an $m$\emph{--operation} on a word in $S$ is, for any $s,t \in S$ with $m_{st}$ finite, replacing a substring $stst\cdots$ of length $m_{st}$ by the string $tsts\cdots$ of length $m_{st}$. Theorem \ref{thm:wordprob} gives a solution to the word problem for Coxeter groups.

\begin{Theorem}[Tits, see Theorem 3.4.2 in Davis \cite{Davis}]
\label{thm:wordprob}
Given a Coxeter system $(W,S)$:
\begin{enumerate}
\item A word $\mathbf{w}$ in $S$ is reduced if and only if it cannot be shortened by a sequence of $m$--operations and/or deletion of subwords $ss, s \in S$.
\item Two reduced words in $S$ define the same group element in $W$ if and only if one can be transformed into the other by a sequence of $m$--operations.
\end{enumerate}
\end{Theorem}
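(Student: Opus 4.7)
My plan is to prove Tits' theorem via the Tits geometric representation of $(W,S)$, together with the length function and the Exchange Condition. Concretely, I would construct a real vector space $V$ with basis $\{\alpha_s\}_{s \in S}$ equipped with the symmetric bilinear form $B(\alpha_s,\alpha_t) = -\cos(\pi/m_{st})$ (with the convention $B(\alpha_s,\alpha_t)=-1$ when $m_{st}=\infty$), and let each $s \in S$ act on $V$ as the reflection $\sigma_s(v) = v - 2B(v,\alpha_s)\alpha_s$. A direct computation on the two-generator subgroups $\langle s,t\rangle$ shows that the order of $\sigma_s\sigma_t$ is exactly $m_{st}$, so this extends to a representation $W \to GL(V)$, which one proves is faithful. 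The root system $\Phi = W \cdot \{\alpha_s\}$ then splits cleanly into positive and negative roots (each root has either all nonnegative or all nonpositive coordinates in the basis $\{\alpha_s\}$), and I would establish the fundamental formula $\ell(w) = |\{\alpha \in \Phi^+ : w\alpha \in \Phi^-\}|$ by induction on word length, tracking how $\sigma_{s_{i_k}}\cdots\sigma_{s_{i_1}}\alpha_{s_{i_k}}$ changes sign along a reduced expression.

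From this counting formula the Exchange Condition follows: if $s_{i_1}\cdots s_{i_k}$ is reduced and $\ell(ws) < \ell(w)$ for some $s \in S$, then $ws = s_{i_1}\cdots \widehat{s_{i_j}} \cdots s_{i_k}$ for some $j$. I would then bootstrap this to Matsumoto's theorem, which is exactly part (2) of the statement: any two reduced expressions for the same $w \in W$ are linked by a chain of $m$-operations. The proof is by induction on $\ell(w)$; given reduced expressions starting with different generators $s$ and $t$, one applies the Exchange Condition repeatedly to show that $w$ also has a reduced expression beginning with the alternating word $stst\cdots$ of length $m_{st}$, which lets us perform a braid move on one of the original expressions to align its first letter with the other, whereupon induction applies.

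Finally, for part (1), I would use Matsumoto's theorem together with the Deletion Condition. Given a non-reduced word $\mathbf{w}' = s_1\cdots s_k$, choose $m$ minimal so that $s_1\cdots s_{m+1}$ is not reduced while $s_1\cdots s_m$ is. Applying the Exchange Condition to the reduced word $s_1\cdots s_m$ with right multiplier $s_{m+1}$ yields an index $i$ with $s_1\cdots s_m s_{m+1} = s_1\cdots\widehat{s_i}\cdots s_m$, which rearranges to the identity $s_i s_{i+1} \cdots s_m = s_{i+1}\cdots s_{m+1}$ between two reduced words of the same length. By part (2), already proved, this identity is realized by a sequence of $m$-operations; performing those $m$-operations on the subword $s_i\cdots s_m$ of $\mathbf{w}'$ produces the adjacent repetition $s_{m+1}s_{m+1}$, which can then be deleted, shortening $\mathbf{w}'$. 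Iterating yields a reduced word.

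The main obstacle is the second step, namely setting up the Tits representation and proving the length-equals-inversions formula cleanly; everything downstream—Exchange, Matsumoto, and the reduction of arbitrary shortenings to $m$-operations plus $ss$-cancellations—is combinatorial once that geometric input is in hand. A secondary subtlety is verifying faithfulness of the representation, for which the standard device is to show that the canonical positive cone in the dual space is a strict fundamental domain for the $W$-action.
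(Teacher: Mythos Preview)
The paper does not prove this theorem at all; it is stated with attribution to Tits and a reference to Theorem~3.4.2 of Davis, and then used as a black box throughout Sections~\ref{sec:essential}--\ref{sec:flexible}. So there is no ``paper's own proof'' to compare against.

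Your outline is the standard route to Tits' solution of the word problem and is essentially correct. A couple of minor comments on the write-up. First, faithfulness of the geometric representation is not strictly needed for what you use: the Exchange Condition and the length formula $\ell(w)=|\{\alpha\in\Phi^+ : w\alpha\in\Phi^-\}|$ already follow once you know that roots split into positives and negatives and that $\ell(ws)<\ell(w)$ iff $w\alpha_s\in\Phi^-$; you never need to know that the kernel of $W\to GL(V)$ is trivial to run Matsumoto's induction. Second, in your argument for part~(1), you should remark explicitly that the right-hand side $s_{i+1}\cdots s_{m+1}$ of the identity $s_i s_{i+1}\cdots s_m = s_{i+1}\cdots s_{m+1}$ is itself reduced (it has the same number of letters as the reduced left-hand side and represents the same group element), since your appeal to part~(2) requires both words to be reduced. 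With those small clarifications the argument goes through.
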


\begin{Corollary}
\label{cor:samelength}
If there are multiple reduced words defining the same group element $w \in W$, then they all have the same length.
\end{Corollary}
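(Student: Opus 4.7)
The plan is to deduce this immediately from part 2 of Theorem \ref{thm:wordprob}. Suppose $\mathbf{w}_1$ and $\mathbf{w}_2$ are two reduced words in $S$ representing the same element $w \in W$. By Theorem \ref{thm:wordprob}(2), there is a finite sequence of $m$--operations transforming $\mathbf{w}_1$ into $\mathbf{w}_2$.

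The key observation is that an $m$--operation, by definition, replaces a substring $stst\cdots$ of length $m_{st}$ by the substring $tsts\cdots$ of the same length $m_{st}$. Hence each $m$--operation preserves the length of the word to which it is applied. Applying this observation inductively along the sequence of $m$--operations from $\mathbf{w}_1$ to $\mathbf{w}_2$ shows that $\mathbf{w}_1$ and $\mathbf{w}_2$ have the same length, which is the desired conclusion.

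There is no real obstacle here; the statement is essentially a one-line consequence of the length-preserving nature of $m$--operations combined with Tits' solution to the word problem. The only thing to double-check is that we are justified in invoking Theorem \ref{thm:wordprob}(2) (which requires both words to be reduced), but this is exactly the hypothesis of the corollary.
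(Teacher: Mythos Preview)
Your proof is correct and matches the paper's approach: the paper states this corollary immediately after Theorem~\ref{thm:wordprob} without proof, treating it as an obvious consequence of the fact that $m$--operations preserve word length. Your argument spells out exactly this reasoning.
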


The Coxeter diagram is the standard way of encoding a Coxeter system. However, it will be more convenient to use the alternate convention described in the introduction. There is no standard name for this other convention, so we have used `defining diagram'. 

An example of a flexible Coxeter group, that we will follow throughout this paper, is as follows:

\begin{Example}
\label{ex:part0}
Consider the Coxeter group $$W =  C_2 \ast (C_2 \times C_2) = \pres{s, t, u}{s^2, t^2, u^2,(tu)^2}.$$ The defining diagram of $W$, shown in Figure \ref{fig:flexiblediag} above, is flexible because the automorphism $\phi$ which interchanges $t$ and $u$ is a nontrivial automorphism which fixes $s$ and each vertex connected to $s$ by an edge.
\end{Example}

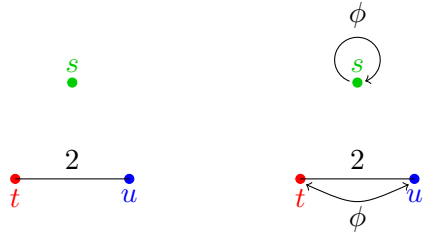
\begin{figure}
\begin{center}
\begin{tikzpicture}[scale=1.5]
\begin{scope}
\filldraw[red] (1.5,0) circle (0.04cm);
\filldraw[blue] (2.5,0) circle (0.04cm);
\filldraw[green!80!black] (2,0.8516) circle (0.04cm);
\draw (1.5,0) -- (2.5,0);
\node[above] at (2,0) {$2$};  
\node[below,red] at (1.5,0) {$t$};
\node[below,blue] at (2.5,0) {$u$};
\node[above,green!80!black] at (2,0.8516) {$s$};
\end{scope}
\begin{scope}[xshift=2.5cm]
\filldraw[red] (1.5,0) circle (0.04cm);
\filldraw[blue] (2.5,0) circle (0.04cm);
\filldraw[green!80!black] (2,0.8516) circle (0.04cm);
\draw (1.5,0) -- (2.5,0);
\node[above] at (2,0) {$2$};  
\node[below,red] at (1.5,0) {$t$};
\node[below,blue] at (2.5,0) {$u$};
\node[above,green!80!black] at (2,0.8516) {$s$};
\draw[<->] (1.55,-0.05) .. controls (2,-0.25) .. (2.45, -0.05);
\draw[->, radius=0.2cm] (2,0.8516)++(0,0.2)++(250:0.2) arc[start angle=250, end angle=-70];
\node[below] at (2, -0.15) {$\phi$};
\node[above] at (2, 1.25) {$\phi$};
\end{scope}
\end{tikzpicture}
\end{center}
\caption{The defining diagram of the Coxeter group $W = (C_2 \times C_2) \ast C_2 = \pres{s, t, u}{s^2, t^2, u^2, (tu)^2}$ and an automorphism $\phi$ illustrating that this defining diagram is flexible. Vertices are coloured to match the colours used in Figure \ref{fig:flexibleauto}.}
\label{fig:flexiblediag}
\end{figure}

The two cases we wish to consider will turn out to be exactly those Coxeter groups whose defining diagrams are or are not flexible.

\subsection{Cayley graph automorphisms}
\label{sec:cay}

\label{def:Cayley}
Let $(W,S)$ be a Coxeter system. The \emph{Cayley graph of \textit{(W,S)}} with respect to the generating set $S$, denoted $\Gamma(W,S)$, is the undirected graph constructed as follows: 
\begin{enumerate} 
\item The vertex set of $\Gamma(W,S)$ is $W$. 
\item There is a single edge of $\Gamma(W,S)$ between $w$ and $ws$, for each $w \in W$ and $s \in S$. This edge is labelled by $s$.
\end{enumerate}

Cayley graphs are usually defined as directed graphs, but because $S$ consists of elements of order two, this is not necessary here. 

We do not require that automorphisms of $\Gamma(W,S)$ preserve the edge labels. There are two straightforward families of automorphisms which we will consider: left-multiplication automorphisms and diagram automorphisms, as defined in the introduction. The following results are straightforward from definitions.

\begin{Lemma}
\label{lem:fixlabels}
Any left-multiplication automorphism $L_w$ of $\Gamma(W,S)$ preserves edge labels. That is, the image under $L_w$ of an edge labelled by $s$ is also labelled by $s$, for any $s \in S$. 
\end{Lemma}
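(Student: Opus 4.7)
The plan is to unpack the definitions and observe that left-multiplication interacts well with the right-multiplication structure used to define edge labels. Specifically, the labelling convention is that an edge is labelled $s$ precisely when its endpoints differ by right-multiplication by $s$, and this property is clearly invariant under left-multiplication.

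In detail, I would take an arbitrary edge $e$ of $\Gamma(W,S)$ labelled by $s \in S$. By the definition of the Cayley graph, the endpoints of $e$ are some $x \in W$ and $xs$. Applying $L_w$, the image edge $L_w(e)$ has endpoints $L_w(x) = wx$ and $L_w(xs) = w(xs) = (wx)s$. Since the definition of $\Gamma(W,S)$ places a unique edge between $wx$ and $(wx)s$ and declares it to be labelled by $s$, and since $L_w$ is a graph automorphism (hence sends the edge $e$ to the unique edge between $wx$ and $(wx)s$), we conclude that $L_w(e)$ is labelled by $s$, as required.

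There is no genuine obstacle here; the lemma is a direct consequence of the associativity of multiplication in $W$ together with the definition of edge labels in $\Gamma(W,S)$. The only thing worth noting is that the argument uses in an essential way that labels are defined via right-multiplication while the automorphism acts by left-multiplication, so the two operations commute.
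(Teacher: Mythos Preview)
Your proof is correct and is precisely the ``straightforward from definitions'' argument the paper alludes to without writing out. There is nothing to add: the key point, as you note, is that edge labels are defined via right-multiplication and $L_w$ acts on the left, so associativity gives the result immediately.
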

\begin{Lemma}
\label{lem:permutelabels}
Any diagram automorphism $\Phi$ of $\Gamma(W,S)$ permutes the set of edge labels. That is, if the image under $\Phi$ of an edge labelled by $s_i$ is labelled by $s_j$, then the image under $\Phi$ of any edge labelled by $s_i$ is labelled by $s_j$. 
\end{Lemma}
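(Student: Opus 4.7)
The plan is to unpack what a diagram automorphism actually does on vertices and then follow a single edge through it. A diagram automorphism $\Phi$ of $\Gamma(W,S)$ comes from a label-preserving automorphism $\sigma$ of the defining diagram, which is by definition a bijection of the vertex set $\{s_i\}_{i \in I}$ such that $m_{ij} = m_{\sigma(i)\sigma(j)}$ for all $i,j \in I$. This condition says exactly that $\sigma$ sends defining relators to defining relators, so the assignment $s_i \mapsto \sigma(s_i)$ extends to a group automorphism of $W$, which we also call $\sigma$ by abuse of notation. The induced map $\Phi$ on the Cayley graph acts on vertices by $\Phi(w) = \sigma(w)$.

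Now I would compute the image of an arbitrary edge. An edge of $\Gamma(W,S)$ labelled by $s_i$ connects some vertex $w$ to the vertex $ws_i$. Under $\Phi$ its endpoints become $\sigma(w)$ and $\sigma(ws_i) = \sigma(w)\sigma(s_i)$. Writing $\sigma(s_i) = s_j$, the image is therefore an edge between $\sigma(w)$ and $\sigma(w)s_j$, which by the definition of the Cayley graph is the edge labelled by $s_j$.

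The key observation is that the label $s_j = \sigma(s_i)$ of the image depends only on $s_i$ (via $\sigma$), and not on the choice of the starting vertex $w$. Hence every edge labelled $s_i$ is sent to an edge labelled $\sigma(s_i)$, and since $\sigma$ is a bijection on $\{s_i\}_{i \in I}$, the induced map on labels is a permutation.

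There is no genuine obstacle here; the whole content is checking that the Coxeter-group automorphism induced by $\sigma$ is compatible with the edge-labelling convention, and this drops out of multiplying by $s_i$ on the right. The statement really is immediate from the definitions, as the author already signals.
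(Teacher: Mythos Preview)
Your argument is correct and is exactly what the paper intends: the lemma is stated without proof as ``straightforward from definitions,'' and you have simply written out that unpacking. There is nothing to add.
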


Consider any automorphism $\Phi$ of $\Gamma(W,S)$ and any $w \in W$. The vertices $w$ and $\Phi(w)$ are both adjacent to $|S|$ edges, one labelled by each element of $S$. The automorphism $\Phi$ takes each edge adjacent to $w$ to an edge adjacent to $\Phi(w)$, so induces a permutation $\phi_w$ of $S$. We call the permutation $\phi_w$ the \emph{local permutation at w induced by $\Phi$}. The following corollary is immediate from Lemmas \ref{lem:fixlabels} and \ref{lem:permutelabels} and definitions.

\begin{Corollary}
\label{cor:samelabels}
Any left-multiplication automorphism of $\Gamma(W,S)$ induces the identity permutation at each vertex. Any diagram automorphism of $\Gamma(W,S)$ induces the same local permutation at each vertex.
\end{Corollary}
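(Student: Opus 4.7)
The plan is to read the statement off directly from the two preceding lemmas together with the definition of the local permutation $\phi_w$. Recall that $\phi_w(s)$ is defined so that if $e$ is the edge at $w$ labelled by $s$, then $\Phi(e)$ is the edge at $\Phi(w)$ labelled by $\phi_w(s)$. So the content to verify is just how Lemmas \ref{lem:fixlabels} and \ref{lem:permutelabels} constrain the label of $\Phi(e)$.

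For the first statement, I would take $\Phi = L_v$ a left-multiplication automorphism and any $w \in W$ and $s \in S$. The edge $e$ between $w$ and $ws$ is labelled by $s$; by Lemma \ref{lem:fixlabels}, $L_v(e)$ is also labelled by $s$. By the definition of the local permutation, this means $\phi_w(s) = s$ for every $s \in S$, so $\phi_w$ is the identity permutation. Since $w$ was arbitrary, $L_v$ induces the identity at every vertex.

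For the second statement, let $\Phi$ be a diagram automorphism and fix $s_i \in S$. Lemma \ref{lem:permutelabels} says there is a single $s_j \in S$, depending only on $s_i$ (and on $\Phi$), such that every edge labelled $s_i$ is sent by $\Phi$ to an edge labelled $s_j$. In particular, for any vertex $w$, the edge at $w$ labelled $s_i$ is sent to an edge at $\Phi(w)$ labelled $s_j$, so $\phi_w(s_i) = s_j$. Since this value is independent of $w$, the local permutation $\phi_w$ is the same at every vertex.

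There is no real obstacle here: both parts are unpacking of definitions once the labelling behaviour from Lemmas \ref{lem:fixlabels} and \ref{lem:permutelabels} is in hand. The only thing to be mildly careful about is distinguishing the two senses in which an edge label can be ``preserved'': a left-multiplication automorphism preserves the label of each individual edge, whereas a diagram automorphism only preserves the labelling \emph{pattern}, globally permuting the label set in the same way across all vertices.
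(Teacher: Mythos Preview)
Your argument is correct and is exactly the unpacking the paper has in mind: it states that the corollary is immediate from Lemmas~\ref{lem:fixlabels} and~\ref{lem:permutelabels} together with the definition of the local permutation, which is precisely what you carried out.
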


\section{Proofs}

\label{sec:proofs}

We now prove Theorem \ref{the:main} and Corollary \ref{cor:mainsemidirect}. In Sections \ref{sec:essential} and \ref{sec:permutations} we will prove Theorem \ref{the:maingen} in the case where the defining diagram is not flexible. We will prove the other direction of Theorem \ref{the:maingen} in Section \ref{sec:flexible}. In Section \ref{sec:discreteness}, we will prove one direction of Theorem \ref{the:maindiscrete}. The rest of Theorem \ref{the:maindiscrete} follows easily from intermediate results in Section \ref{sec:permutations}. In Section \ref{sec:semidirect}, we will show how Corollary \ref{cor:mainsemidirect} follows from Theorem \ref{the:maingen}. Finally, in Section \ref{sec:Davis} we shall extend these results to the automorphism groups of Davis complexes. 

Throughout this section, $(W,S)$ is a Coxeter system with $S$ finite and $\Gamma = \Gamma(W,S)$ is the Cayley graph of $W$ with respect to the generating set $S$. Note that the claim that any automorphism of $\Gamma$ is the composition of a diagram automorphism and a left-multiplication automorphism is equivalent to the claim that any automorphism of $\Gamma$ which fixes the vertex corresponding to the identity is a diagram automorphism. 

\subsection{Essential cycles}
\label{sec:essential}

We first observe:

\begin{Lemma} 
\label{lem:evenlength}
The graph $\Gamma$ has no cycles of odd length.
\end{Lemma}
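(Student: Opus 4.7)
The plan is to exploit the parity/sign homomorphism that exists on every Coxeter group. Every cycle in $\Gamma$ based at a vertex $w$ traces out a sequence of generators $s_{i_1}, s_{i_2}, \ldots, s_{i_k}$ with $w s_{i_1} s_{i_2} \cdots s_{i_k} = w$, equivalently $s_{i_1} s_{i_2} \cdots s_{i_k} = e$ in $W$. So to rule out odd cycles it suffices to show every word in $S$ that represents the identity in $W$ has even length.

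The key observation is that the map $s_i \mapsto -1$ extends to a group homomorphism $\varepsilon \,{:}\, W \!\longrightarrow\! \{\pm 1\}$. To see this is well-defined, one just checks that every defining relator in the Coxeter presentation has even length as a word in $S$: the relator $s_i^2$ has length $2$, and $(s_is_j)^{m_{ij}}$ has length $2m_{ij}$, which is even whenever $m_{ij}$ is finite. Hence $\varepsilon$ is a homomorphism, and applied to a word $s_{i_1} \cdots s_{i_k}$ representing the identity it gives $(-1)^k = 1$, forcing $k$ to be even. (Alternatively, one can invoke Corollary~\ref{cor:samelength}: the empty word is reduced and represents the identity, so every word representing the identity must differ from the empty word by $m$-operations and insertions/deletions of subwords $ss$; both kinds of move preserve the parity of the length.)

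Putting these pieces together, any closed walk in $\Gamma$ has even length, and in particular there are no cycles of odd length. There is no real obstacle here; the only thing worth being careful about is the well-definedness of $\varepsilon$, which reduces to the trivial observation that $2m_{ij}$ is even.
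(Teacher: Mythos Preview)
Your proof is correct and follows essentially the same approach as the paper: the paper's one-line argument is that all defining relators have even length, and you make explicit the standard mechanism (the sign homomorphism $\varepsilon$) by which this forces every word representing the identity to have even length. Your version simply fills in the details behind the paper's terse assertion.
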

\begin{proof}
This result follows from each of the defining relators $(st)^{m_{st}}$ having even length.
\end{proof}

Given an embedded cycle in $\Gamma$ of even length $2n$, we define a pair of vertices of that cycle to be \emph{opposite vertices} if they are distance $n$ apart in the cycle.

\begin{Definition}
\label{def:essential}
An embedded cycle of vertices $(v_1,v_2,\dotsc, v_{2n})$ of even length $2n$ in $\Gamma$ is \emph{essential} if for any two opposite vertices $v_i$ and $v_{i+n}$ of the cycle, the only paths of length $n$ from one to the other are the two contained in the cycle, namely $(v_i,v_{i+1},\dotsc, v_{i+n-1},v_{i+n})$ and $(v_i,v_{i-1},\dotsc,v_{i+n+1},v_{i+n})$, and there are no paths from $v_i$ to $v_{i+n}$ of length less than $n$.
\end{Definition}

From Lemma $\ref{lem:evenlength}$, the restriction in this definition to cycles of even length is natural.

\begin{Remark}
\label{rem:graphprop}
Note that the definition of an essential cycle depends only on graph properties. In particular, it does not depend on the group elements labelling the vertices or the generators labelling the edges of $\Gamma(W,S)$. 
\end{Remark}

Now, we will characterise the essential cycles in $\Gamma$.

\begin{Proposition}
\label{prop:essential}
A cycle in $\Gamma$ is essential if and only if it corresponds to a defining relator $(st)^{m_{st}}$ for some $s,t \in S$, with $m_{st} < \infty$.
\end{Proposition}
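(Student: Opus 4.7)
The plan is to use Tits' solution to the word problem (Theorem \ref{thm:wordprob}) and Corollary \ref{cor:samelength} to analyze the reduced words corresponding to the two halves of the cycle. For the forward direction, if the cycle is traced by the relator $(st)^{m_{st}}$, it has length $2m_{st}$ and sits in the dihedral subgroup of $W$ generated by $s$ and $t$, so it is embedded. Picking any pair of opposite vertices $v, v'$ on it, the two halves spell out the words $stst\cdots$ and $tsts\cdots$, each of length $m_{st}$, both representing $v^{-1}v'$. Neither admits any $m$-operation except the one swapping them, nor any deletion of $ss$-subwords, so both are reduced. Corollary \ref{cor:samelength} then says every word for $v^{-1}v'$ has length at least $m_{st}$, which rules out shorter walks. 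A walk of length exactly $m_{st}$ must therefore correspond to a reduced word, and Theorem \ref{thm:wordprob}(2) forces such a word to be one of $stst\cdots$ or $tsts\cdots$, giving exactly the two halves.

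For the converse, let $(v_1, \ldots, v_{2n})$ be an essential cycle with edge labels $w_1, \ldots, w_{2n}$, and consider the opposite pair $v_1, v_{n+1}$. The two halves yield reduced words $\mathbf{u} = w_1 \cdots w_n$ and $\mathbf{u}' = w_{2n} w_{2n-1} \cdots w_{n+1}$ for the element $v_1^{-1}v_{n+1}$, and these are distinct, since equality of the letter sequences would force the two halves to traverse the same intermediate vertices, contradicting embeddedness. By Theorem \ref{thm:wordprob}(2) there is a finite sequence of $m$-operations $\mathbf{u} = \mathbf{u}^{(0)} \to \cdots \to \mathbf{u}^{(k)} = \mathbf{u}'$. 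Each $\mathbf{u}^{(j)}$ is a reduced word for $v_1^{-1}v_{n+1}$ of length $n$, so spells a walk of length $n$ from $v_1$ to $v_{n+1}$ in $\Gamma$. Essentiality forces every $\mathbf{u}^{(j)} \in \{\mathbf{u}, \mathbf{u}'\}$, and since consecutive terms differ by a (non-trivial) $m$-operation, the sequence must alternate between $\mathbf{u}$ and $\mathbf{u}'$. In particular a single $m$-operation transforms $\mathbf{u}$ directly into $\mathbf{u}'$.

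It remains to show this $m$-operation acts on the whole word. Suppose it replaces a substring $w_i \cdots w_{i+m-1}$ of the form $stst\cdots$ by $tsts\cdots$. If $i>1$, then $w_1$ is unchanged by the operation, so $w_1$ must equal the first letter of $\mathbf{u}'$, namely $w_{2n}$. But then $v_2 = v_1 w_1 = v_1 w_{2n} = v_{2n}$, which is impossible in an embedded cycle of length $2n \geq 4$ (and Lemma \ref{lem:evenlength} guarantees $n \geq 2$). A symmetric argument at the other end rules out $i+m-1 < n$. Hence $i=1$ and $m=n$, so $\mathbf{u} = stst\cdots$ of length $m_{st}=n$, and the cycle is exactly the defining relator $(st)^{m_{st}}$. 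The main obstacle is this final step: correctly using the embeddedness hypothesis to forbid $m$-operations on proper substrings, since in general such operations would produce extra walks of length $n$ between the opposite vertices and thereby violate essentiality.
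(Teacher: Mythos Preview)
Your proof is correct and follows essentially the same approach as the paper: both directions hinge on Tits' word-problem theorem, with the converse reducing to the observation that a single $m$-operation must carry $\mathbf{u}$ to $\mathbf{u}'$ and then using embeddedness (your $v_2 \neq v_{2n}$, $v_n \neq v_{n+2}$; the paper's $s_1 \neq s_{2n}$, $s_n \neq s_{n+1}$) to force that operation to span the entire word. Your write-up is slightly more explicit in places---checking embeddedness in the forward direction via the dihedral subgroup, and spelling out why intermediate words in the $m$-operation chain must coincide with $\mathbf{u}$ or $\mathbf{u}'$---but the underlying argument is the same.
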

\begin{proof}
Consider a cycle of length $2n$ in $\Gamma$ which corresponds to a relator $(st)^{m_{st}}$, and two opposite vertices $w$ and $w'$ in this cycle. The claim that this cycle is essential is equivalent to there being exactly two reduced words in $S$ defining the group element $ww'^{-1}$, both of length $n$. 

Two reduced words defining the element $w(w')^{-1}$ are $$\mathbf{a} = \underbrace{stst\dots}_{m_{st} \text{ terms}} \text{ and } \mathbf{b} = \underbrace{tsts\dots}_{m_{st} \text{ terms}}.$$ These words are reduced by Theorem \ref{thm:wordprob}, so there are no paths between $w$ and $w'$ of length less than $n$. If there was another reduced word defining the same group element, then it could only be obtained from $\mathbf{a}$ by $m$--operations, by Theorem \ref{thm:wordprob}. However, there is only one $m$--operation which can be applied to $\mathbf{a}$, and it produces $\mathbf{b}$. Likewise, the only $m$--operation which can be applied to $\mathbf{b}$ produces $\mathbf{a}$. Thus these are the only two reduced words defining the group element $w(w'^{-1})$, so the cycle corresponding to any defining relator $(st)^{m_{st}}$ is essential, as claimed.

Now, consider an arbitrary essential cycle $$(w, \quad ws_1, \quad ws_1s_2,\quad \dots, \quad ws_1s_2\cdots s_{2n})$$

Consider the opposite vertices $w$ and $w' = ws_1s_2\cdots s_{n}$. There are no paths from $w'$ to $w$ of length less than $n$, because the cycle is essential. Since the cycle is essential, there are exactly two reduced words defining the group element $ww'^{-1}$. These are $\mathbf{a} = s_1s_2\cdots s_n$ and $\mathbf{b} = s_{2n}s_{2n-1}\cdots s_{n+1}$. Thus by Theorem \ref{thm:wordprob}, it is possible to transform $\mathbf{a}$ into $\mathbf{b}$ using $m$--operations. Any intermediate word will be a third reduced word defining $ww'^{-1}$, which would be a contradiction. Thus the application of a single $m$--operation must transform $\mathbf{a}$ into $\mathbf{b}$.

Now, $s_1$ and $s_{2n}$ are distinct, as are $s_n$ and $s_{n+1}$, because essential cycles are embedded cycles. Thus the two words $\mathbf{a}$ and $\mathbf{b}$ have different first elements and different last elements. Therefore the single $m$--operation transforming $\mathbf{a}$ into $\mathbf{b}$ must replace both the first and last element of $\mathbf{a}$, so must replace the entire word $\mathbf{a}$. Therefore, $\mathbf{a}$ is the word $\underbrace{stst\dots}_{m_{st} \text{ terms}}$ and $\mathbf{b}$ is the word $\underbrace{tsts\dots}_{m_{st} \text{ terms}}$, so our essential cycle corresponds to the defining relator $(st)^{m_{st}}$.

Thus the only essential cycles are those corresponding to defining relators, as claimed.
\end{proof}

\begin{Corollary}
\label{cor:essentialpreserved}
Any automorphism of $\Gamma$ takes a cycle corresponding to a defining relator $(st)^{m_{st}}$ to a cycle corresponding to a (potentially different) defining relator.
\end{Corollary}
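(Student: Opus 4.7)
The plan is to derive this corollary as a direct consequence of Proposition \ref{prop:essential} together with Remark \ref{rem:graphprop}, which emphasizes that being an essential cycle is a purely graph-theoretic notion. The argument has essentially one ingredient: graph automorphisms preserve any property formulated purely in terms of the underlying graph structure.

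First I would note that the definition of an essential cycle (Definition \ref{def:essential}) is phrased entirely in terms of vertices, edges, path lengths, and distances in $\Gamma$, with no reference to the Coxeter generator labels or the group elements labelling vertices. Consequently, if $\Phi$ is any automorphism of $\Gamma$ and $C$ is an essential cycle, then $\Phi(C)$ is again an embedded cycle of the same length, and the distance and path-counting conditions between opposite vertices of $\Phi(C)$ follow by transporting them across $\Phi$ from the corresponding conditions in $C$. Thus $\Phi(C)$ is essential.

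Next, I would invoke Proposition \ref{prop:essential} twice. A cycle corresponding to a defining relator $(st)^{m_{st}}$ is essential, so its image under any $\Phi \in \Aut(\Gamma)$ is essential, and hence (by the same proposition applied in the other direction) corresponds to a defining relator, possibly for a different pair of generators in $S$.

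Since Proposition \ref{prop:essential} has already done all the real work, there is no genuine obstacle here; the only thing to be careful about is to explicitly invoke Remark \ref{rem:graphprop} so the reader sees that automorphisms, which need not preserve edge labels, nevertheless preserve the class of essential cycles. This is precisely the reason essential cycles were defined in a label-free way in the first place.
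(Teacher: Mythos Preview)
Your proposal is correct and follows essentially the same approach as the paper: invoke Remark \ref{rem:graphprop} to observe that essentiality is a purely graph-theoretic property preserved by automorphisms, then apply Proposition \ref{prop:essential} in both directions to conclude that cycles corresponding to defining relators are sent to cycles corresponding to defining relators.
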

\begin{proof}
As noted in Remark \ref{rem:graphprop} above, the definition of an essential cycle (Definition \ref{def:essential}) uses only graph properties. These properties are preserved by graph automorphisms, so the image of an essential cycle is an essential cycle. We know from Proposition \ref{prop:essential} that essential cycles are exactly those corresponding to defining relators. Thus the image under any graph automorphism of a cycle corresponding to a defining relator is a cycle corresponding to a defining relator. Hence, graph automorphisms preserve the set of cycles corresponding to defining relators. \end{proof}

\begin{Corollary}
\label{cor:essentialalternate}
Essential cycles in $\Gamma$ have edge labels alternating between two elements of $S$.
\end{Corollary}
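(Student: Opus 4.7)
The plan is to deduce this corollary directly from Proposition \ref{prop:essential} together with the definition of the Cayley graph. By Proposition \ref{prop:essential}, any essential cycle in $\Gamma$ corresponds to a defining relator $(st)^{m_{st}}$ for some $s,t\in S$ with $m_{st}<\infty$. I would first fix such a cycle and choose a vertex $w$ on it to serve as a starting point.

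Next, I would read off the labels of the edges traversed around the cycle. The relator $(st)^{m_{st}}$ gives the word $sts\cdots$ of length $2m_{st}$, so by the definition of the Cayley graph (Definition \ref{def:Cayley}), the successive vertices of the cycle are
\[
w,\ ws,\ wst,\ wsts,\ \ldots,\ w(st)^{m_{st}}=w,
\]
and the edge between $w(st)^{k}$ and $w(st)^{k}s$ is labelled by $s$, while the edge between $w(st)^{k}s$ and $w(st)^{k+1}$ is labelled by $t$. Thus the labels alternate between $s$ and $t$ as one traverses the cycle, proving the corollary.

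The only subtle point, and hence the one I would be most careful about, is noting that this labelling pattern is independent of the choice of starting vertex $w$ and of which of the two generators ($s$ or $t$) is read first; both orientations of the cycle produce a two-element alternating label pattern. Since Proposition \ref{prop:essential} exhausts all essential cycles, no further cases need be considered.
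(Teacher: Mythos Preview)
Your proposal is correct and is exactly the intended argument: the paper states this corollary without proof, as it is immediate from Proposition~\ref{prop:essential} together with the edge-labelling in Definition~\ref{def:Cayley}, which is precisely what you spell out.
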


\subsection{Local permutations}
\label{sec:permutations}

\begin{Proposition}
\label{prop:diagramstar}
For any automorphism $\Phi$ of $\Gamma$ which fixes the vertex corresponding to the identity, the local permutation induced by $\Phi$ at the identity induces an automorphism of the Coxeter diagram of $(W,S)$.
\end{Proposition}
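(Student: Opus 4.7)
The plan is to show that the local permutation $\phi_e$ at the identity is a label-preserving automorphism of the defining diagram. Since $\phi_e$ is already a bijection of $S$, what remains is to verify that for all $s_i, s_j \in S$ the quantity $m_{ij}$ is preserved by $\phi_e$, meaning $m_{\phi_e(s_i)\phi_e(s_j)} = m_{ij}$ (with the convention that both sides are simultaneously finite or infinite).

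First I would treat the case $m_{ij} < \infty$. Consider the cycle in $\Gamma$ based at the identity $e$ corresponding to the defining relator $(s_is_j)^{m_{ij}}$, namely the cycle that traces out $e, s_i, s_is_j, s_is_js_i, \dots, s_j, e$ and has length $2m_{ij}$. The two edges of this cycle incident to $e$ are labelled by $s_i$ (the outgoing edge to $s_i$) and by $s_j$ (the incoming edge from $s_j$). By Corollary \ref{cor:essentialpreserved} and Proposition \ref{prop:essential}, the image of this cycle under $\Phi$ is an essential cycle, and hence corresponds to some defining relator. Since $\Phi$ fixes $e$ and induces $\phi_e$ there, the two edges of the image cycle incident to $e$ are labelled by $\phi_e(s_i)$ and $\phi_e(s_j)$; together with Corollary \ref{cor:essentialalternate} this forces the image cycle to correspond to the relator $(\phi_e(s_i)\phi_e(s_j))^{m_{\phi_e(s_i)\phi_e(s_j)}}$. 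Since $\Phi$ is a graph automorphism, the image cycle has the same length $2m_{ij}$, so $m_{\phi_e(s_i)\phi_e(s_j)} = m_{ij}$, as required.

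Next I would handle the case $m_{ij} = \infty$. Here there is no essential cycle based at $e$ with initial labels $s_i$ and $s_j$, so the previous argument has no direct input. The natural way around this is to argue by contradiction using $\Phi^{-1}$: if $m_{\phi_e(s_i)\phi_e(s_j)}$ were finite, then $\Phi^{-1}$ also fixes $e$ and induces the permutation $\phi_e^{-1}$ at $e$, so the argument from the previous paragraph applied to $\Phi^{-1}$ and the pair $\phi_e(s_i), \phi_e(s_j)$ would yield $m_{ij} = m_{\phi_e(s_i)\phi_e(s_j)} < \infty$, a contradiction. Thus $m_{\phi_e(s_i)\phi_e(s_j)} = \infty$ as well.

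Combining the two cases, $\phi_e$ preserves all labels $m_{ij}$ (including the distinction between finite and infinite), so $\phi_e$ sends edges of the defining diagram to edges with matching labels and non-edges to non-edges; it is therefore a label-preserving automorphism of the defining diagram. The main obstacle is really just the $m_{ij}=\infty$ case, since essential cycles give no information there; the trick of applying the argument to $\Phi^{-1}$ is what makes the proof go through without further work.
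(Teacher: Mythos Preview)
Your argument is correct and follows essentially the same route as the paper: both use Proposition~\ref{prop:essential} to identify essential cycles through $1$ with finite $m_{st}$, and then use preservation of essential cycles and their lengths under $\Phi$ to conclude $m_{st}=m_{\phi_e(s)\phi_e(t)}$. The only cosmetic difference is in the $m_{ij}=\infty$ case: where you invoke $\Phi^{-1}$, the paper simply phrases the characterisation as a biconditional (``$m_{st}<\infty$ if and only if there is an essential cycle through $s,1,t$''), which handles both directions at once without a separate case---so the ``obstacle'' you flag is not really there.
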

\begin{proof}
For any $s,t \in S$, $m_{st} \neq \infty$ if and only if there is a unique essential cycle through the vertices $s, 1, t$ in $\Gamma$, by Proposition \ref{prop:essential}. This cycle has length $2m_{st}$. Graph automorphisms preserve essential cycles and lengths of cycles, so the images of essential cycles are essential cycles of the same length, and so $m_{st} = m_{\Phi(s)\Phi(t)}$ for any $s,t \in S$.

Thus $\Phi$ induces an automorphism of the Coxeter diagram of $(W,S)$, as required.
\end{proof}

The statement of the following lemma is illustrated by Figure \ref{fig:samelabels}.

\begin{Lemma}
\label{lem:samelabels}
Consider two elements $s,t \in S$ with $m_{st}$ finite and two adjacent vertices $w$ and $ws$ in $\Gamma$. These vertices are joined by an edge $e$, which is labelled by $s$. Consider the two edges $e_1 = \{w,wt\}$ and $e_2 = \{ws,wst\}$. For any automorphism $\Phi$ of $\Gamma$, the images of the two edges $e_1$ and $e_2$ have the same label.
\end{Lemma}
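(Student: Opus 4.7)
The plan is to exhibit a single essential cycle on which $e_1$, $e$, and $e_2$ appear as three consecutive edges, and then invoke Corollary \ref{cor:essentialpreserved} and Corollary \ref{cor:essentialalternate} to force $\Phi(e_1)$ and $\Phi(e_2)$ to share a label. Concretely, I would take $C$ to be the cycle in $\Gamma$ corresponding to the defining relator $(st)^{m_{st}}$, based at $w$: its vertex sequence is $(w, ws, wst, wsts, \dots)$ going one way around and $(w, wt, wts, wtst, \dots)$ going the other. By Proposition \ref{prop:essential} this cycle is essential, and reading around $C$ one sees that the edges $e_1 = \{w,wt\}$, $e = \{w,ws\}$, $e_2 = \{ws, wst\}$ occur consecutively (as the edges incident to $w$ and to $ws$ on $C$), labelled $t$, $s$, $t$ respectively, in accord with Corollary \ref{cor:essentialalternate}.

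Next I would apply $\Phi$. By Corollary \ref{cor:essentialpreserved}, $\Phi(C)$ is again an essential cycle, so by Corollary \ref{cor:essentialalternate} its edges alternate between two elements $s',t' \in S$. The images $\Phi(e_1)$, $\Phi(e)$, $\Phi(e_2)$ are three consecutive edges of $\Phi(C)$; since $\Phi(e_1)$ and $\Phi(e_2)$ lie at distance two on an alternating cycle, they must carry the same label. This is exactly the conclusion of the lemma.

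There is not really a main obstacle here: the whole content of the argument is the observation that $e_1$, $e$, $e_2$ sit on a common essential cycle, and every subsequent step is a direct citation of results already established in Section \ref{sec:essential}. The one small point requiring care is the verification that, in the relator cycle $(st)^{m_{st}}$ based at $w$, the two neighbours of $e$ along the cycle are indeed $e_1$ and $e_2$ rather than some other pair of edges — but this is immediate from the explicit vertex sequence written above, using $m_{st} \geq 2$ so that $e_1$, $e$, $e_2$ really are distinct edges of the cycle.
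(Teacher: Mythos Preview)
Your proposal is correct and follows essentially the same approach as the paper: exhibit the essential cycle corresponding to $(st)^{m_{st}}$ through the path $(wt,e_1,w,e,ws,e_2,wst)$, apply Corollary~\ref{cor:essentialpreserved} to see its image is essential, and then use Corollary~\ref{cor:essentialalternate} to conclude that $\Phi(e_1)$ and $\Phi(e_2)$, being two steps apart on an alternating cycle, share a label. Your additional remark checking that $e_1,e,e_2$ are genuinely distinct consecutive edges (using $m_{st}\ge 2$) is a nice point of care that the paper leaves implicit.
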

\begin{proof}
Because $m_{st}$ is finite, there is an essential cycle containing the path $$(wt, e_1, w, e, ws, e_2, wst)$$ from the vertex $wt$ to $wst$. The edge labels along this path are $t$, $s$, $t$, as illustrated in Figure $\ref{fig:samelabels}$. From Corollary \ref{cor:essentialpreserved}, we know that the image of this essential cycle is an essential cycle. Hence, by Corollary \ref{cor:essentialalternate} the image of this essential cycle has alternating edge labels, so the images of $e_1$ and $e_2$ have the same label, as claimed.
\end{proof}

\begin{figure}
\begin{center}
\begin{tikzpicture}[scale=1.5,xscale=2.5]
\begin{scope}
\draw[blue] (-0.9,0.3) -- (0,0) (1,0) -- (1.9,0.3);
\draw[red] (0,0) -- (1,0);
\node[above,blue] at (-0.45,0.15) {$t$};
\node[above,blue] at (1.45,0.15) {$t$};
\node[above,red] at (0.5,0) {$s$};
\node[below] at (-0.45,0.15) {$e_1$};
\node[below] at (1.45,0.15) {$e_2$};
\node[below] at (0.5,0) {$e$};
\node[below] at (-0.9,0.3) {$wt$};
\node[below] at (0,0) {$w$};
\node[below] at (1,0) {$ws$};
\node[below] at (1.9,0.3) {$wst$};
\end{scope}
\draw[->] (0.5,-0.23) -- (0.5,-0.73);
\node[right] at (0.5,-0.5) {$\phi$};
\begin{scope}[yshift=-1cm]
\draw[blue] (-0.9,0.3) -- (0,0) (1,0) -- (1.9,0.3);
\draw[red] (0,0) -- (1,0);
\node[above,blue] at (-0.45,0.15) {$t'$};
\node[above,blue] at (1.45,0.15) {$t'$};
\node[above,red] at (0.5,0) {$s'$};
\node[below] at (-0.45,0.15) {$\phi(e_1$)};
\node[below] at (1.45,0.15) {$\phi(e_2)$};
\node[below] at (0.5,0) {$\phi(e)$};
\node[below] at (-0.9,0.3) {$\phi(wt)$};
\node[below] at (0,0) {$\phi(w)$};
\node[below] at (1,0) {$\phi(ws)$};
\node[below] at (1.9,0.3) {$\phi(wst)$};
\end{scope}
\end{tikzpicture}
\end{center}
\caption{The path discussed in Lemma \ref{lem:samelabels} and its image. There is an essential cycle through the path $\phi(e_1)\phi(e)\phi(e_2)$, so the edges $\phi(e_1)$ and $\phi(e_2)$ have the same label.}
\label{fig:samelabels}
\end{figure}
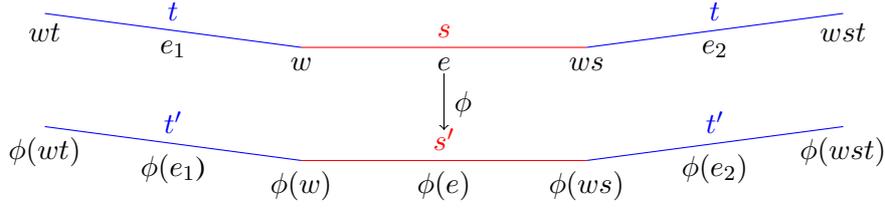

\begin{Corollary}
\label{cor:adjsame}
Suppose the defining diagram of $(W,S)$ is not flexible. Consider two adjacent vertices $w$ and $ws$ of $\Gamma$. For an arbitrary automorphism $\Pi$ of $\Gamma$, let the local permutations of the edge labels at $w$ and $ws$ induced by $\Pi$ be $\pi_w$ and $\pi_{ws}$ respectively. Then the permutation $\pi_{ws}^{-1}\pi_w$ of $S$ induces an automorphism of the Coxeter diagram which fixes both the vertex $s$ and the vertices corresponding to any $t \in S$ with $m_{st}$ finite. 
\end{Corollary}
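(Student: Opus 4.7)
The plan is to establish two intermediate facts and then assemble them: first, that each of $\pi_w$ and $\pi_{ws}$ is individually a diagram automorphism; second, that the two permutations agree on $s$ and on every $t\in S$ with $m_{st}$ finite. Together these immediately give the corollary, since $\pi_{ws}^{-1}\pi_w$ will then be a composition of diagram automorphisms (hence itself a diagram automorphism) whose value on each of the listed elements is trivial.

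For the first fact I would reduce to the situation of Proposition \ref{prop:diagramstar} by conjugating $\Pi$ by left-multiplications. Specifically, set
\[
\Psi_w \;=\; L_{\Pi(w)^{-1}} \circ \Pi \circ L_w.
\]
Since $L_w(1)=w$, $\Pi(w)=\Pi(w)$, and $L_{\Pi(w)^{-1}}(\Pi(w))=1$, the automorphism $\Psi_w$ fixes the identity vertex. By Corollary \ref{cor:samelabels} the local permutation induced by any left-multiplication at any vertex is the identity, so unwinding how local permutations compose under composition of graph automorphisms shows that the local permutation of $\Psi_w$ at $1$ is exactly $\pi_w$. Proposition \ref{prop:diagramstar} then declares $\pi_w$ a diagram automorphism. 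Applying the same conjugation trick with $w$ replaced by $ws$ yields that $\pi_{ws}$ is a diagram automorphism.

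For the second fact, the edge $e=\{w,ws\}$ has label $s$ from either endpoint, so its image $\Pi(e)$ has a single label which must equal both $\pi_w(s)$ and $\pi_{ws}(s)$; hence $\pi_{ws}^{-1}\pi_w(s)=s$. For each $t\in S$ with $m_{st}$ finite, Lemma \ref{lem:samelabels} says that $\Pi(\{w,wt\})$ and $\Pi(\{ws,wst\})$ receive the same label. But the first of those edges is labelled $t$ at $w$, so its image is labelled $\pi_w(t)$, while the second is labelled $t$ at $ws$, so its image is labelled $\pi_{ws}(t)$. Equating the two labels gives $\pi_w(t)=\pi_{ws}(t)$, and therefore $\pi_{ws}^{-1}\pi_w(t)=t$.

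I do not foresee a serious obstacle: the argument is essentially bookkeeping that stitches together Proposition \ref{prop:diagramstar} and Lemma \ref{lem:samelabels}. The one point requiring care is the composition rule for local permutations and the verification that conjugating by left-multiplications leaves the relevant local permutation undisturbed. It is worth flagging that the non-flexibility hypothesis plays no visible role inside this corollary; its purpose must be to be exploited in later steps, where together with the present conclusion it will force $\pi_{ws}^{-1}\pi_w$ to be the trivial diagram automorphism, so that $\pi_w=\pi_{ws}$.
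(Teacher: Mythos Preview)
Your argument is correct and uses the same two ingredients as the paper---Proposition~\ref{prop:diagramstar} after a conjugation by left-multiplications, and Lemma~\ref{lem:samelabels}---but assembles them slightly differently. The paper builds a single composite automorphism
\[
\Psi \;=\; (L_{\Pi^{-1}(ws)})^{-1}\circ\Pi^{-1}\circ L_{ws}\circ (L_{\Pi(w)})^{-1}\circ\Pi\circ L_{w},
\]
checks that it fixes the identity, asserts that its local permutation there is $\pi_{ws}^{-1}\pi_w$, and invokes Proposition~\ref{prop:diagramstar} once. You instead conjugate $\Pi$ separately at $w$ and at $ws$, obtaining that each of $\pi_w$ and $\pi_{ws}$ is \emph{individually} an automorphism of the Coxeter diagram, and then compose. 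Your route proves a mildly stronger intermediate fact and keeps the local-permutation bookkeeping simpler (each conjugate involves only three maps rather than six); the paper's route reaches the composite $\pi_{ws}^{-1}\pi_w$ in one shot. The second half of the proof---that $\pi_w$ and $\pi_{ws}$ agree on $s$ because they both read the label of $\Pi(\{w,ws\})$, and agree on each $t$ with $m_{st}<\infty$ by Lemma~\ref{lem:samelabels}---is identical in both. Your closing observation is also on the mark: neither proof uses the non-flexibility hypothesis, which enters only in Corollary~\ref{cor:labelpermute} to force $\pi_{ws}^{-1}\pi_w$ to be trivial.
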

\begin{proof}
Recall that $L_w$ is the automorphism of $\Gamma$ defined by left-multiplication by $w$. Consider the automorphism $$\Psi = (L_{\Pi^{-1}(ws)})^{-1} \circ \Pi^{-1} \circ L_{ws} \circ (L_{\Pi(w)})^{-1} \circ \Pi \circ L_w.$$ 

By the construction of $\Psi$, it is not difficult to check that $\Psi$ fixes the identity vertex of $\Gamma$.

The local permutation induced by $\Psi$ at the identity is $\pi_{ws}^{-1}\pi_w$, because the four left-multiplication automorphisms in the definition of $\Psi$ preserve the edge labels. Thus we may apply Proposition \ref{prop:diagramstar}, and deduce that $\pi_{ws}^{-1}\pi_w$ induces an automorphism of the Coxeter diagram of $(W,S)$.

Now, $\pi_w(s) = \pi_{ws}(s)$, because the edge between $w$ and $ws$ is labelled by $s$. Therefore, $\pi_{ws}^{-1}\pi_w(s) = s$. From Lemma \ref{lem:samelabels}, we know that for any $t \in S$ with $m_{st}$ finite, $\pi_w(t) = \pi_{ws}(t)$. Therefore $\pi_{ws}^{-1}\pi_w(t) = t$ for any $t \in S$ with $m_{st}$ finite. Therefore, the permutation $\pi_{ws}^{-1}\pi_w$ induces an automorphism of the Coxeter diagram which fixes both $s$ and any $t \in S$ with $m_{st}$ finite, as required.
\end{proof}

\begin{Corollary}
\label{cor:labelpermute}
If the defining diagram of $(W,S)$ is not flexible, then any automorphism $\Pi$ of $\Gamma$ permutes the edge labels.
\end{Corollary}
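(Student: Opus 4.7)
The plan is to deduce that the local permutation $\pi_w$ induced by $\Pi$ is the \emph{same} permutation of $S$ at every vertex $w$ of $\Gamma$; once this is established, $\Pi$ permutes edge labels in the sense of Lemma \ref{lem:permutelabels}.

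First I would apply Corollary \ref{cor:adjsame} to an arbitrary pair of adjacent vertices $w$ and $ws$ in $\Gamma$. That corollary tells us $\pi_{ws}^{-1}\pi_w$ induces a label-preserving automorphism of the defining diagram which fixes $s$ and every vertex $t \in S$ joined to $s$ by an edge (i.e.\ with $m_{st}$ finite). Now I would invoke the non-flexibility hypothesis: by the definition of flexibility in the introduction, if the defining diagram is not flexible then for \emph{every} vertex $s \in S$, the only label-preserving diagram automorphism fixing $s$ and all vertices adjacent to $s$ is the identity. Therefore $\pi_{ws}^{-1}\pi_w = \mathrm{id}$, so $\pi_w = \pi_{ws}$.

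Next, since $\Gamma$ is connected, any two vertices of $\Gamma$ are joined by a finite edge path $w = w_0, w_1, \dotsc, w_k = w'$ with each $w_{i+1} = w_i s_i$ for some $s_i \in S$. Applying the previous step to each consecutive pair gives $\pi_{w_0} = \pi_{w_1} = \dotsb = \pi_{w_k}$, so there is a single permutation $\pi$ of $S$ with $\pi_w = \pi$ for every vertex $w$. This means that for any edge $\{w, ws\}$ of $\Gamma$, labelled by $s$, its image $\{\Pi(w), \Pi(ws)\}$ is labelled by $\pi(s)$; in particular the image of every $s$-labelled edge is labelled by $\pi(s)$, so $\Pi$ permutes the edge labels.

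The only substantive step is the first one, and its content is entirely contained in Corollary \ref{cor:adjsame} combined with the definition of flexibility; the remainder is a standard connectedness propagation argument. No further obstacle is anticipated.
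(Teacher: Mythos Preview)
Your proposal is correct and matches the paper's own proof essentially line for line: apply Corollary~\ref{cor:adjsame} to adjacent vertices, use non-flexibility to force $\pi_{ws}^{-1}\pi_w = \mathrm{id}$, and propagate by connectedness of $\Gamma$. There is nothing to add.
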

\begin{proof}
From Corollary \ref{cor:adjsame}, we have that if the local permutations induced by $\Pi$ at any two adjacent vertices of $\Gamma$ are $\pi_1$ and $\pi_2$, then $\pi_2^{-1}\pi_1$ induces an automorphism of the Coxeter diagram, and thus an automorphism of the defining diagram. We also have that this induced automorphism of the defining diagram fixes both $s$ and each $t \in S$ which is connected to $s$ by an edge (in the defining diagram), using the definition of the defining diagram. If the defining diagram is not flexible, then the automorphism induced by $\pi_2^{-1}\pi_1$ must be the trivial automorphism, so $\pi_1 = \pi_2$. Thus the local permutations at any two adjacent vertices must be the same. Because $\Gamma$ is connected, it follows that the local permutations induced by $\Pi$ are the same at any two vertices of $\Gamma$. Therefore $\Pi$ permutes the edge labels of $\Gamma$. 
\end{proof}

All that remains to prove Theorem \ref{the:maingen} for Coxeter groups whose defining diagrams are not flexible is the following.

\begin{Proposition}
\label{prop:isdiagram}
If the defining diagram of $(W,S)$ is not flexible, then any automorphism $\Phi$ of $\Gamma$ which permutes the edge labels and fixes the identity element is a diagram automorphism.
\end{Proposition}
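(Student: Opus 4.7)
The plan is to leverage the two earlier key tools: Proposition \ref{prop:diagramstar}, which controls the local permutation at the identity, and the defining action of a diagram automorphism on $\Gamma$. First I would extract from the hypothesis that $\Phi$ permutes edge labels the existence of a single permutation $\sigma$ of $S$ such that every edge of $\Gamma$ labelled $s$ is mapped by $\Phi$ to an edge labelled $\sigma(s)$. Since $\Phi$ fixes the identity, the local permutation at the identity is exactly $\sigma$, so Proposition \ref{prop:diagramstar} tells us that $\sigma$ induces an automorphism of the Coxeter diagram (equivalently, of the defining diagram). This gives us a canonical candidate: the diagram automorphism of $\Gamma$ induced by $\sigma$, which I denote $D_\sigma$.

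Next, the plan is to show $\Phi = D_\sigma$. Both automorphisms fix the identity vertex, and both send each edge labelled $s$ to an edge labelled $\sigma(s)$, so the composition $D_\sigma^{-1} \circ \Phi$ is a label-preserving automorphism of $\Gamma$ that fixes the identity. The remaining step is a clean induction on the distance $d(1,w)$ from the identity in $\Gamma$: if a vertex $w$ is fixed and $ws$ is a neighbour, then the edge $\{w,ws\}$ is labelled $s$ and is sent to the unique $s$-labelled edge at $w$, so $ws$ is fixed as well. Since $\Gamma$ is connected, every vertex is fixed, and hence $D_\sigma^{-1} \circ \Phi$ is trivial, giving $\Phi = D_\sigma$.

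The main obstacle, such as there is one, is conceptual rather than technical: one must recognise that the earlier Proposition \ref{prop:diagramstar} already does the heavy lifting of promoting a local permutation to a genuine diagram automorphism, after which comparing $\Phi$ with $D_\sigma$ reduces the problem to the rigidity of $\Gamma$ under label-preserving, identity-fixing maps. Note in particular that the non-flexibility hypothesis is not used directly within this proposition; it enters the wider argument only through Corollary \ref{cor:labelpermute}, which supplies the global label-permutation property of $\Phi$ that we take as an assumption here.
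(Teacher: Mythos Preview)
Your proposal is correct and follows essentially the same approach as the paper: extract the local permutation at the identity, invoke Proposition \ref{prop:diagramstar} to promote it to a diagram automorphism, and then argue that the composition with its inverse is a label-preserving, identity-fixing automorphism which must be trivial by connectedness of $\Gamma$. Your induction on distance makes explicit what the paper leaves implicit, and your observation that non-flexibility is not actually used within this proposition is accurate.
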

\begin{proof}
Let the local permutation induced by $\Phi$ at the identity be $\phi_e$. The permutation $\phi_e$ induces an automorphism of the Coxeter diagram, by Proposition \ref{prop:diagramstar}. This induced automorphism of the Coxeter diagram in turn induces a diagram automorphism $\Phi_e$ of $\Gamma$. The automorphism $\Phi\Phi_e^{-1}$ fixes the identity vertex and fixes the edge labels around every vertex, so since $\Gamma$ is connected, it is the identity map. Hence $\Phi = \Phi_e$, so $\Phi_e$ is a diagram automorphism, as required.
\end{proof}

Thus for any Coxeter system $(W,S)$ whose defining diagram is not flexible, any automorphism of $\Gamma$ is the composition of a left-multiplication and a diagram automorphism. We have established one direction of Theorem \ref{the:maingen}.

\subsection{Flexible defining diagrams}
\label{sec:flexible}

Throughout this section we assume that the Coxeter system $(W,S)$ has defining diagram which is flexible. By Corollary \ref{cor:samelabels}, to show the remaining direction of Theorem \ref{the:maingen}, it suffices to construct an automorphism of $\Gamma$ which takes two edges with the same label to two edges with different labels. 

Our construction is as follows. Since the defining diagram of $(W,S)$ is flexible, we may choose $s \in S$ and a nontrivial automorphism $\phi$ of the defining diagram of $(W,S)$ which fixes $s \in S$ and also fixes each $t \in S$ which is connected to $s$ by an edge in the defining diagram. In particular, $\phi$ is a permutation of $S$ which fixes $s$ and each $t$ with $m_{st}$ finite. Throughout this section, $\phi$ and $s$ will be fixed. By abuse of notation, we denote by $\phi$ the map on words over $S$ which applies $\phi$ to each letter.

We will first define a function $\Psi_\phi$ on reduced words in $S$, and then show that this function induces a well-defined automorphism of $\Gamma(W,S)$, which we shall also denote by $\Psi_\phi$. 

\begin{Definition}
\label{def:interestingaut}
Let $\mathbf{w}$ be an arbitrary reduced word in $S$. If $\mathbf{w}$ contains $s$, then write $\mathbf{w} = w_1sw_2$, where the (possibly empty) subword $w_1$ does not contain $s$, and define $\Psi_\phi(\mathbf{w}) = \phi(w_1)sw_2$. If $\mathbf{w}$ does not contain $s$, then define $\Psi_\phi(w) = \phi(w)$. That is, 

$$
\Psi_\phi(\mathbf{w}) = \left\{ \begin{array}{lcl} \phi(w_1)sw_2 & \text{if} & \mathbf{w} = w_1sw_2 \\ \phi(\mathbf{w}) & \text{if} & s \text{ is not in } \mathbf{w}. \end{array} \right.
$$
\end{Definition}

\begin{Example}
\label{ex:part1}
Consider the Coxeter group and the automorphism $\phi$ from Example \ref{ex:part0}. In this example, $\Psi_\phi$ acts on a reduced word $\mathbf{w}$ by interchanging any $t,u$ which occur before the first instance of $s$ in $\mathbf{w}$, and fixing the remainder of the word.
\end{Example}

To simplify notation, we will use $\Psi$ instead of $\Psi_\phi$. In order to show that $\Psi$ induces a well-defined automorphism of $\Gamma(W,S)$, we will use Theorem \ref{thm:wordprob} to show that both deleting subwords of the form $tt, t \in S$, and $m$--operations commute with the operation of applying $\phi$, in the sense of the following proposition.

\begin{Proposition}
\label{prop:comdel}
For any word $\mathbf{w}$ in $S$:
\begin{enumerate}[label=(\arabic{enumi})] 
\item If a word $\mathbf{w'}$ is obtained from $\mathbf{w}$ by deleting a subword of the form $tt$ with $t \in S$ and then applying $\phi$, then $\mathbf{w'}$ can also be obtained by first applying $\phi$ to $\mathbf{w}$ and then deleting a subword of the form $\phi(t)\phi(t)$ from $\phi(\mathbf{w})$.
\item If a word $\mathbf{w''}$ is obtained by applying an $m$--operation $m_1$ to $\mathbf{w}$ and then applying $\phi$, then $\mathbf{w''}$ can also be obtained by applying $\phi$ to $\mathbf{w}$ and then applying a (potentially different) $m$--operation $m_2$ to $\phi(\mathbf{w})$.
\end{enumerate}
\end{Proposition}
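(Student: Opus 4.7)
The plan is to exploit two features of $\phi$: it acts letter by letter on words over $S$, and as a label-preserving automorphism of the defining diagram it satisfies $m_{\phi(t)\phi(t')} = m_{tt'}$ for all $t,t' \in S$. With these in hand, each part will reduce to simple bookkeeping about where in the computation $\phi$ is inserted.

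For part (1), I would write $\mathbf{w} = \mathbf{u}\,tt\,\mathbf{v}$, where the displayed $tt$ is the subword to be deleted. Deleting first and then applying $\phi$ produces $\phi(\mathbf{u})\phi(\mathbf{v})$. Applying $\phi$ first produces $\phi(\mathbf{w}) = \phi(\mathbf{u})\phi(t)\phi(t)\phi(\mathbf{v})$, in which $\phi(t)\phi(t)$ is a subword of the required form whose deletion also produces $\phi(\mathbf{u})\phi(\mathbf{v})$; so the two outputs agree.

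For part (2), I would let $m_1$ act on $\mathbf{w} = \mathbf{u}\,\sigma\,\mathbf{v}$, where $\sigma$ is an alternating string $t_1 t_2 t_1 t_2 \cdots$ of length $m_{t_1 t_2}$, replacing $\sigma$ by $\sigma' = t_2 t_1 t_2 t_1 \cdots$ of the same length, so that $\mathbf{w''} = \phi(\mathbf{u})\phi(\sigma')\phi(\mathbf{v})$, while $\phi(\mathbf{w}) = \phi(\mathbf{u})\phi(\sigma)\phi(\mathbf{v})$. Since $\phi(\sigma)$ and $\phi(\sigma')$ are alternating strings in $\phi(t_1)$ and $\phi(t_2)$ of length $m_{t_1 t_2}$, replacing $\phi(\sigma)$ by $\phi(\sigma')$ will be a legitimate $m$-operation $m_2$ on $\phi(\mathbf{w})$ precisely when $m_{\phi(t_1)\phi(t_2)} = m_{t_1 t_2}$, and then $m_2$ produces the required word $\mathbf{w''}$.

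The one point in the argument that requires more than bookkeeping is this last equality of $m$-values, and that is exactly the content of $\phi$ being a label-preserving automorphism of the defining diagram; so there is no genuine obstacle, just a condition whose role needs to be made explicit.
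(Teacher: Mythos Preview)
Your proposal is correct and follows essentially the same approach as the paper's proof: both arguments exploit that $\phi$ acts letter by letter, so that deleting or performing an $m$-operation at a given position commutes with applying $\phi$. You spell out more explicitly the decomposition $\mathbf{w} = \mathbf{u}\,tt\,\mathbf{v}$ (respectively $\mathbf{u}\,\sigma\,\mathbf{v}$) and the role of the equality $m_{\phi(t_1)\phi(t_2)} = m_{t_1 t_2}$, which the paper leaves implicit by saying only that the proof of (2) is ``similar.''
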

\begin{proof}
For (1), let $d_1$ be the operation of deleting an instance of $tt$ from $\mathbf{w}$. Let $d_2$ be the operation of deleting $\phi(t)\phi(t)$ from the corresponding position in $\phi(\mathbf{w})$. Because the $n$th letter of $\phi(\mathbf{w})$ depends only on the $n$th letter of $\mathbf{w}$, the word $(\phi \circ d_1)(\mathbf{w})$ is equal to the word $(d_2 \circ \phi)(\mathbf{w})$. The proof of (2) is similar.
\end{proof} 

\begin{Corollary}
\label{cor:trans}
A word $\mathbf{w}$ in $S$ can be transformed into another word $\mathbf{w'}$ in $S$ by a sequence of $m$--operations and deletions of subwords $tt$ with $t \in S$ if and only if the word $\phi(\mathbf{w})$ in $S$ can be transformed into the word $\phi(\mathbf{w'})$ in $S$ by another sequence of $m$--operations and deletions of subwords $tt$ with $t \in S$.
\end{Corollary}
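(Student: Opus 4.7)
The plan is to reduce the corollary to Proposition \ref{prop:comdel} by a straightforward induction on the length of the transformation sequence, and then invoke the symmetry $\phi \leftrightarrow \phi^{-1}$ for the converse.

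For the forward direction, I would argue by induction on the number $k$ of operations (either $m$--operations or deletions of subwords $tt$) in a sequence taking $\mathbf{w}$ to $\mathbf{w'}$. The base case $k=0$ is trivial since $\mathbf{w} = \mathbf{w'}$ gives $\phi(\mathbf{w}) = \phi(\mathbf{w'})$. For the inductive step, write the sequence as $\mathbf{w} \to \mathbf{w}_1 \to \cdots \to \mathbf{w}_{k-1} \to \mathbf{w'}$, where the last operation is either a deletion or an $m$--operation. By Proposition \ref{prop:comdel}, there is a corresponding operation of the same type (deletion or $m$--operation) taking $\phi(\mathbf{w}_{k-1})$ to $\phi(\mathbf{w'})$. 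Combining this with the sequence obtained by the inductive hypothesis (which transforms $\phi(\mathbf{w})$ into $\phi(\mathbf{w}_{k-1})$) yields the desired sequence from $\phi(\mathbf{w})$ to $\phi(\mathbf{w'})$.

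For the converse direction, I would use that $\phi$, being a label-preserving automorphism of the defining diagram fixing $s$ and each neighbour of $s$, has an inverse $\phi^{-1}$ with the same properties. Hence Proposition \ref{prop:comdel} applies equally well to $\phi^{-1}$. Now if $\phi(\mathbf{w})$ can be transformed into $\phi(\mathbf{w'})$ by a sequence of $m$--operations and deletions, applying the forward direction (proved above) to $\phi^{-1}$ yields a sequence of $m$--operations and deletions transforming $\phi^{-1}(\phi(\mathbf{w})) = \mathbf{w}$ into $\phi^{-1}(\phi(\mathbf{w'})) = \mathbf{w'}$, which is the conclusion.

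There is no real obstacle here; the content is entirely in Proposition \ref{prop:comdel}, and the corollary is a formal consequence obtained by iterating the single-step commutation relation and then invoking the invertibility of $\phi$. The only point requiring a touch of care is to note that applying $\phi$ letterwise is a bijection on words of a given length (so the auxiliary operations provided by Proposition \ref{prop:comdel} can be chained without any ambiguity), and that $\phi^{-1}$ satisfies the hypotheses needed to run the argument in reverse.
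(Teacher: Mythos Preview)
Your proposal is correct and is exactly the argument the paper intends: the corollary is stated without proof immediately after Proposition~\ref{prop:comdel}, and your induction on the length of the transformation sequence together with the observation that $\phi^{-1}$ satisfies the same hypotheses is precisely the routine unpacking of that implication. There is nothing to add.
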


We now deduce some properties of the transformation $\Psi$.

\begin{Proposition}
\label{prop:alphareduced}
For any reduced word $\mathbf{w} = s_1s_2 \cdots s_n$, the image $\Psi(\mathbf{w})$ is a reduced word.
\end{Proposition}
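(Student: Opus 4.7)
My plan is to argue by contradiction: supposing $\Psi(\mathbf{w})$ is not reduced, I would use Theorem \ref{thm:wordprob} to produce a sequence of $m$--operations on $\Psi(\mathbf{w})$ ending in a word containing a repeated letter, and then pull the whole sequence back to a parallel sequence of $m$--operations on $\mathbf{w}$, ultimately exposing a repeated letter in $\mathbf{w}$ after those operations and contradicting reducedness via Theorem \ref{thm:wordprob}. The case when $\mathbf{w}$ contains no $s$ is immediate from Corollary \ref{cor:trans}, so I would reduce to $\mathbf{w} = w_1 s w_2$ with $w_1$ containing no $s$, in which case $\Psi(\mathbf{w}) = \phi(w_1) s w_2$.

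The central ingredient is a step-by-step commutation lemma: given $\mathbf{w}_i$ with $\mathbf{v}_i = \Psi(\mathbf{w}_i)$, any $m$--operation taking $\mathbf{v}_i$ to $\mathbf{v}_{i+1}$ can be matched by an $m$--operation taking $\mathbf{w}_i$ to some $\mathbf{w}_{i+1}$ with $\Psi(\mathbf{w}_{i+1}) = \mathbf{v}_{i+1}$. I would prove this by case analysis on the position of the operated-on substring relative to the first $s$ of $\mathbf{v}_i$: if it lies entirely before the first $s$, applying $\phi^{-1}$ gives the matching $m$--operation on $\mathbf{w}_i$ via Corollary \ref{cor:trans}; if it lies entirely at or after the first $s$, then $\Psi$ acts as the identity on the positions involved and the same $m$--operation applies to $\mathbf{w}_i$; if it straddles the first $s$, then since $\phi(w_1)$ contains no $s$ the substring must have the form $xsxs\cdots$ of length $m_{xs}$ for some $x$ with $m_{xs}$ finite, so $\phi(x) = x$, making $\Psi$ act as the identity on the substring so the same $m$--operation applies to $\mathbf{w}_i$. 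In each case a short check verifies that the first-$s$ position updates consistently so that $\Psi(\mathbf{w}_{i+1}) = \mathbf{v}_{i+1}$.

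Applying this commutation inductively would produce $\mathbf{w}_0 = \mathbf{w}, \mathbf{w}_1, \dots, \mathbf{w}_n$ with $\Psi(\mathbf{w}_n) = \mathbf{v}_n$ containing a subword $tt$. A parallel three-case analysis on the location of $tt$ in $\mathbf{v}_n$ shows $\mathbf{w}_n$ itself contains a deletable repeated subword: $\phi^{-1}(t)\phi^{-1}(t)$ if $tt$ lies before the first $s$, the same $tt$ if it lies at or after, or $ss$ if the repetition straddles the first $s$ (in which case $t=s$ necessarily). Combined with the preceding $m$--operations this shortens $\mathbf{w}$, contradicting its reducedness by Theorem \ref{thm:wordprob}. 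The main obstacle is the straddling case, where the argument relies crucially on $\phi$ fixing $s$ and every neighbour of $s$.
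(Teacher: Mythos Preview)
Your proposal is correct and follows essentially the same route as the paper. Both arguments proceed by contradiction, split into the same two cases (whether or not $\mathbf{w}$ contains $s$), and in the interesting case perform the same three-way analysis of an $m$--operation relative to the first occurrence of $s$---entirely before, entirely at or after, or straddling---using in the straddling case that the non-$s$ letter involved is fixed by $\phi$. The paper states this commutation somewhat tersely (``any $m$--operation acts entirely before the first $s$, entirely after the first $s$, or involves only letters fixed by $\phi$'') and then appeals to Corollary~\ref{cor:trans}; you spell out the step-by-step tracking of the first-$s$ position more explicitly, which is a welcome clarification. One minor remark: your straddling case for the final repeated pair $tt$ is actually vacuous, since if $t=s$ then $\phi(w_1)$ would contain $s$, contradicting its definition---so that case simply cannot occur rather than producing an $ss$ in $\mathbf{w}_n$.
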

\begin{proof}
If the reduced word $\mathbf{w}$ does not contain $s$, then by definition $\Psi(\mathbf{w}) = \phi(\mathbf{w})$. If $\Psi(\mathbf{w})$ is not a reduced word, then there is a sequence of $m$--operations and deletions of repeated pairs which transforms $\Psi(\mathbf{w}) = \phi(\mathbf{w})$ into a shorter word, which we may denote $\phi(\mathbf{w'})$ for some word $\mathbf{w'}$, as $\phi$ is a bijection on the set of words in $S$. By Corollary \ref{cor:trans}, there is a sequence of operations transforming $\mathbf{w}$ into the word $\mathbf{w'}$. Now, $\mathbf{w'}$ is shorter than $\mathbf{w}$, because $\phi$ preserves the lengths of words. Hence $\mathbf{w}$ is not a reduced word, which is a contradiction.

In the case where $\mathbf{w}$ does contain $s$, our argument is very similar. Any $m$--operation involving $s$ must act on strings comprised of $s$ and $t$ with $m_{st}$ finite, and any such $t$ is fixed by $\phi$. Thus any $m$--operation acts entirely before the first $s$, entirely after the first $s$, or involves only letters fixed by $\phi$. Therefore any $m$--operation commutes with $\phi$ in the sense of Proposition \ref{prop:comdel}.

By definition, we have $\Psi(\mathbf{w}) = \phi(w_1)sw_2$. If $\Psi(\mathbf{w})$ is not a reduced word, then there is a sequence of operations which transforms $\Psi(\mathbf{w})$ into a shorter word. As with the previous case, Corollary \ref{cor:trans} implies that there is a sequence of operations transforming $\mathbf{w}$ into a shorter word, contradicting the fact that $\mathbf{w}$ is a reduced word. The only difference from the previous case is that there are $m$--operations acting on a section of the word unaffected by $\phi$, and these will be included in the new sequence unchanged, rather than being transformed according to Corollary \ref{cor:trans}. 

Therefore in either case, $\Psi(\mathbf{w})$ is a reduced word. 
\end{proof}

\begin{Proposition}
\label{prop:alphaaut}
The map $\Psi$ on reduced words in Definition \ref{def:interestingaut} above induces a well-defined automorphism of $\Gamma$. 
\end{Proposition}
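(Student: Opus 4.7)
The plan is to verify three things: that the formula for $\Psi$ descends to a well-defined map on $W$, that this map is a bijection on $W$, and that it preserves the adjacency relation in $\Gamma$.

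For well-definedness, by Theorem \ref{thm:wordprob} any two reduced words representing the same element of $W$ are connected by a sequence of $m$-operations, so it suffices to show that if $\mathbf{w}'$ is obtained from a reduced word $\mathbf{w}$ by a single $m$-operation then $\Psi(\mathbf{w}')$ is obtained from $\Psi(\mathbf{w})$ by a sequence of $m$-operations. I would split into three cases according to the position of the $m$-operation relative to the first occurrence of $s$ in $\mathbf{w}$. If it lies entirely in $w_1$, an $m$-operation on $\phi(w_1)$ works because $\phi$ is a diagram automorphism and so $m_{\phi(\alpha)\phi(\beta)} = m_{\alpha\beta}$. If it lies entirely in $w_2$, then the identical operation applies, since $\Psi$ leaves $w_2$ alone. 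If it straddles the first $s$, then by inspection it involves only $s$ and some $t$ with $m_{st}$ finite, both of which are fixed by $\phi$; a direct calculation confirms that $\Psi(\mathbf{w})$ and $\Psi(\mathbf{w}')$ differ by the analogous $m$-operation at a (possibly slightly shifted) position, reflecting a shift of the first $s$. For bijectivity, observe that $\phi^{-1}$ also fixes $s$ and each $t$ with $m_{st}$ finite, so the analogous map $\Psi_{\phi^{-1}}$ is well-defined on $W$ by the same argument; a short case-split on whether $s$ occurs in $\mathbf{w}$ shows $\Psi_{\phi^{-1}} \circ \Psi_{\phi}$ is the identity.

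For adjacency, given $w \in W$ and $t \in S$ I must show $\Psi(wt)$ is adjacent to $\Psi(w)$. Well-definedness lets me choose the most convenient reduced expression for $w$. If $\ell(wt) > \ell(w)$, any reduced $\mathbf{w}$ works, since $\mathbf{w} t$ is then a reduced expression for $wt$; comparing $\Psi$-images directly yields $\Psi(wt) = \Psi(w) \cdot t$, or $\Psi(w) \cdot \phi(t)$ in the subcase where $\mathbf{w}$ contains no $s$ and $t \neq s$. If $\ell(wt) < \ell(w)$, I instead pick a reduced $\mathbf{w}$ ending in $t$ (which exists by a standard property of Coxeter groups), so that deleting the final letter gives a reduced expression for $wt$; a similar direct comparison yields $\Psi(wt) = \Psi(w) \cdot t$ (with the position of the first $s$ perhaps appearing in the shortened word, which is handled by the same splitting).

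The main obstacle is the bookkeeping in the straddling subcase of the well-definedness step: the position of the first $s$ may shift under the $m$-operation, and one must verify in each subcase that $\Psi(\mathbf{w}')$ is nevertheless $m$-equivalent to $\Psi(\mathbf{w})$. The observation that tames this is that the letters involved in any such operation are precisely those fixed by $\phi$, so $\Psi$ effectively commutes with the operation; once this is in hand, bijectivity and adjacency follow by routine but careful case analysis.
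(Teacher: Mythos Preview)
Your proposal is correct and follows essentially the same route as the paper. The paper packages your three-case analysis of $m$-operations into the preceding results (Proposition~\ref{prop:comdel}, Corollary~\ref{cor:trans}, and Proposition~\ref{prop:alphareduced}) and then invokes them, while you unpack the argument directly; the bijectivity via $\Psi_{\phi^{-1}}$ and the adjacency check are likewise the same, with the paper simply declaring adjacency ``immediate from the definition'' where you spell out the length-increase/length-decrease case split.
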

\begin{proof}
Firstly, we show that $\Psi$ induces a well defined map on the vertex set $W$ of $\Gamma$. That is, that given two reduced words $\mathbf{w}$ and $\mathbf{w'}$ in $S$ which define the same group element in $W$, $\Psi(\mathbf{w})$ and $\Psi(\mathbf{w'})$ must define the same group element. Since $\mathbf{w}$ and $\mathbf{w'}$ are reduced words, $\Psi(\mathbf{w})$ and $\Psi(\mathbf{w'})$ are reduced words, by Proposition \ref{prop:alphareduced}. By Theorem \ref{thm:wordprob}, it suffices to show that if there is a sequence of $m$--operations transforming $\mathbf{w}$ into $\mathbf{w'}$ then there is a sequence of $m$--operations transforming $\Psi(\mathbf{w})$ into $\Psi(\mathbf{w'})$. This result follows from Corollary \ref{cor:trans} in the same way that Proposition \ref{prop:alphareduced} does. Hence, $\Psi$ is a well-defined map on $W$. Note also that $\Psi$ is a bijection, because $(\Psi_{\phi})^{-1} = \Psi_{\phi^{-1}}$.

All that remains is to show that $\Psi$ induces an automorphism of $\Gamma$. To do this, we need to check that vertices $w$ and $w'$ are adjacent if and only if $\Psi(w)$ and $\Psi(w')$ are adjacent. This is immediate from the definition of $\Psi$.
\end{proof}

\begin{Example}
Continuing from Example \ref{ex:part1}, Figure \ref{fig:flexibleauto} shows part of the Cayley graph $\Gamma(W,S)$, along with the automorphism $\Psi$. The automorphism $\Psi$ can be thought of as interchanging some `branches' of $\Gamma$ while leaving others fixed.
\end{Example}

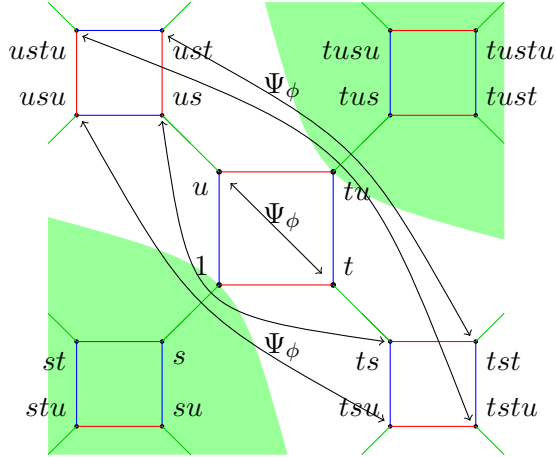
\begin{figure}
\begin{center}
\begin{tikzpicture}[scale=1.5]
\fill[green!40] (0.4,2.5) .. controls (0.85,0.85) .. (2.5,0.4) -- (2.5,2.5) -- (0.4,2.5);
\fill[green!40] (0.6,-1.5) .. controls (0.15,0.15) .. (-1.5,0.6) -- (-1.5,-1.5) -- (0.6,-1.5);

\squarea{(0,0)}{1}{0.5}

\squarea{(1.5,1.5)}{0.75}{0.25}
\squareb{(-1.25,1.5)}{0.75}{0.25}
\squarea{(1.5,-1.25)}{0.75}{0.25}
\squarea{(-1.25,-1.25)}{0.75}{0.25}

\node[above left] at (0,0) {$1$};
\node[below left] at (0,1) {$u$};
\node[above right] at (1,0) {$t$};
\node[below right] at (1,1) {$tu$};

\node[above left] at (-1.25,1.5) {$usu$};
\node[below left] at (-1.25,2.25) {$ustu$};
\node[above right] at (-0.5,1.5) {$us$};
\node[below right] at (-0.5,2.25) {$ust$};

\node[above left] at (-1.25,-1.25) {$stu$};
\node[below left] at (-1.25,-0.5) {$st$};
\node[above right] at (-0.5,-1.25) {$su$};
\node[below right] at (-0.5,-0.5) {$s$};

\node[above left] at (1.5,1.5) {$tus$};
\node[below left] at (1.5,2.25) {$tusu$};
\node[above right] at (2.25,1.5) {$tust$};
\node[below right] at (2.25,2.25) {$tustu$};

\node[above left] at (1.5,-1.25) {$tsu$};
\node[below left] at (1.5,-0.5) {$ts$};
\node[above right] at (2.25,-1.25) {$tstu$};
\node[below right] at (2.25,-0.5) {$tst$};

\draw[<->] (0.1,0.9) -- (0.9,0.1);
\draw[<->] (-1.2,1.45) .. controls (-0.25,-0.25) .. (1.45,-1.2);
\draw[<->] (-0.45,2.2) .. controls (1.25,1.25) .. (2.2,-0.45);
\draw[<->] (-0.5,1.45) .. controls (-0.25,-0.25) .. (1.45,-0.5);
\draw[<->] (-1.2,2.2) .. controls (1.25,1.25) .. (2.2,-1.2);

\node at (0.55,1.75) {$\Psi_\phi$};
\node at (0.55,0.6) {$\Psi_\phi$};
\node at (0.55,-0.55) {$\Psi_\phi$};
\end{tikzpicture}
\end{center}

\caption{Part of the Cayley graph of the Coxeter group from \ref{ex:part1}, and the automorphism $\Psi_\phi$ constructed from the automorphism $\phi$ illustrated in Figure \ref{fig:flexiblediag}. The automorphism $\Psi_\phi$ fixes the areas shaded green, and interchanges the other two `branches' as shown.}
\label{fig:flexibleauto}
\end{figure}

We will now show that the automorphism $\Psi$ is not the composition of diagram and left-multiplication automorphisms.

\begin{Proposition}
\label{prop:notpermute}
The automorphism $\Psi$ does not permute the edge labels of $\Gamma$.
\end{Proposition}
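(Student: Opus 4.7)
The plan is to exhibit two edges of $\Gamma$ that share a common label but whose images under $\Psi$ receive different labels. Since $\phi$ is a nontrivial automorphism of the defining diagram, there exists some $r \in S$ with $\phi(r) \neq r$. Note that necessarily $r \neq s$, and $r$ is not connected to $s$ by an edge in the defining diagram (since $\phi$ fixes $s$ and all its neighbors). In particular $r \neq s$, so the single-letter word $r$ and the two-letter word $sr$ are both reduced.

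First I would consider the edge $e_1 = \{1, r\}$ in $\Gamma$, which carries label $r$. Computing from Definition \ref{def:interestingaut}, since $r$ does not contain the letter $s$, we have $\Psi(1) = 1$ and $\Psi(r) = \phi(r)$, so the image edge is $\{1, \phi(r)\}$, which carries the label $\phi(r)$. Next I would consider the edge $e_2 = \{s, sr\}$, which also carries label $r$. Here $\Psi(s) = s$ (taking $w_1, w_2$ both empty) and $\Psi(sr) = sr$ (taking $w_1$ empty and $w_2 = r$), so the image edge is $\{s, sr\}$, still carrying the label $r$.

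Finally, since $\phi(r) \neq r$ by choice of $r$, the two edges $e_1$ and $e_2$ share label $r$ but their images under $\Psi$ have different labels $\phi(r)$ and $r$ respectively. By Lemma \ref{lem:permutelabels}, this precludes $\Psi$ from permuting the set of edge labels, completing the proof.

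There is essentially no obstacle here beyond choosing the right pair of edges: the only mild subtlety is verifying that $sr$ is reduced (immediate, as $r \neq s$) and that the relevant words fall into the appropriate cases of Definition \ref{def:interestingaut}, both of which are transparent from the definition of $\Psi$.
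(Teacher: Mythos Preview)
Your proof is correct and follows essentially the same approach as the paper's: both pick a generator not fixed by $\phi$, compare the edge from $1$ to that generator with the edge from $s$ to $s$ times that generator, and observe that $\Psi$ sends these to edges with different labels. Your version is slightly more explicit about why the relevant words are reduced and fall into the correct cases of Definition~\ref{def:interestingaut}, but otherwise the arguments are the same.
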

\begin{proof}
Choose an element $t \in S$ with $\phi(t) \neq t$. This is possible because $\phi$ is a nontrivial automorphism of the Coxeter diagram of $(W,S)$.

There is an edge $e_1$ labelled by $t$ between the vertices $1$ and $t$. The image $\Psi(e_1)$ is the edge between $\Psi(1) = 1$ and $\Psi(t) = \phi(t)$, which is labelled by $\phi(t) \neq t$. There is another edge, $e_2$, labelled by $t$ between the vertices $s$ and $st$. The image $\Psi(e_2)$ is the edge between $\Psi(s)=s$ and $\Psi(st)=st$, which is also labelled by $t$. These edges are shown in Figure \ref{fig:alpha}.

Hence the automorphism $\Psi$ takes two edges which are both labelled by $t$, $e_1$ and $e_2$, to edges labelled by $\phi(t) \neq t$ and $t$ respectively, which means that $\Psi$ does not permute the edge labels of $\Gamma(W,S)$.
\end{proof} 

\begin{figure}
\begin{center}
\begin{tikzpicture}[scale=1.8]
\begin{scope}
\filldraw (0,0) circle(0.02cm);
\filldraw (90:1) circle(0.02cm);
\filldraw (210:1) circle(0.02cm);
\filldraw (330:1) circle(0.02cm);
\filldraw (210:1)++(90:1) circle(0.02cm);
\draw[red] (0,0) -- (210:1);
\draw[blue] (0,0) -- (90:1);
\draw[blue] (210:1) -- ++(90:1);
\draw[green!80!black] (0,0) -- (330:1);
\node[below] at (0,0) {$1$};
\node[above] at (90:1) {$t$};
\node[below] at (330:1) {$\phi(t)$};
\node[below] at (210:1) {$s$};
\draw (210:1)++(90:1.15) node {$st$};
\node[blue,right] at (90:0.5) {$t$};
\draw[blue] (210:1)++(90:0.5)++(0.1,0) node {$t$};
\node[above,green!80!black] at (330:0.6) {$\phi(t)$};
\node[below,red] at (210:0.4) {$s$};
\end{scope}
\begin{scope}[xshift=3cm]
\filldraw (0,0) circle(0.02cm);
\filldraw (90:1) circle(0.02cm);
\filldraw (210:1) circle(0.02cm);
\filldraw (330:1) circle(0.02cm);
\filldraw (210:1)++(90:1) circle(0.02cm);
\draw[red] (0,0) -- (210:1);
\draw[blue] (0,0) -- (90:1);
\draw[blue] (210:1) -- ++(90:1);
\draw[green!80!black] (0,0) -- (330:1);
\node[below] at (0,0) {$1$};
\node[above] at (90:1) {$t$};
\node[below] at (330:1) {$\phi(t)$};
\node[below] at (210:1) {$s$};
\draw (210:1)++(90:1.15) node {$st$};
\node[blue,right] at (90:0.5) {$t$};
\draw[blue] (210:1)++(90:0.5)++(0.1,0) node {$t$};
\node[above,green!80!black] at (330:0.6) {$\phi(t)$};
\node[below,red] at (210:0.4) {$s$};
\draw[radius=1cm, ->] (80:1) arc[start angle=80, end angle=-20];
\draw[radius=0.5cm, ->] (60:0.5) arc[start angle=60, end angle=0];
\draw[radius=0.2cm, ->] (210:1.2)++(-30:0.2) arc[start angle=330, end angle=60];
\draw[radius=0.2cm, ->] (210:1)++(90:1.2)++(240:0.2) arc[start angle=240, end angle=-60];
\draw[radius=0.2cm, ->] (210:1)++(90:0.5)++(0.1,0.1) arc[start angle=150, end angle=-150];
\node at (30:0.6) {$\phi$};
\node at (30:1.1) {$\phi$};
\node at (-0.32,0.05) {$\phi$};
\node at (210:1.55) {$\phi$};
\node at (-0.78,1.05) {$\phi$};
\end{scope}
\end{tikzpicture}
\end{center}
\caption{The part of $\Gamma(W,S)$ used in the proof of Proposition \ref{prop:notpermute}, and the action of $\phi$. Note that $\Psi$ takes two edges labelled by $t$ to edges labelled by $t$ and $\phi(t) \neq t$.}
\label{fig:alpha}
\end{figure}
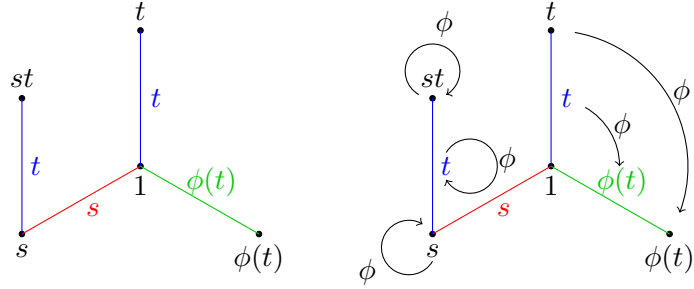

\begin{Corollary}
The automorphism $\Psi$ is not a composition of left-multiplication and diagram automorphisms.
\end{Corollary}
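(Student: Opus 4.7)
The plan is to combine Proposition \ref{prop:notpermute} with the earlier structural lemmas about left-multiplication and diagram automorphisms. The key observation is that both families of automorphisms are \emph{label-rigid} in a precise sense: by Lemma \ref{lem:fixlabels} a left-multiplication automorphism $L_w$ fixes every edge label, and by Lemma \ref{lem:permutelabels} a diagram automorphism $\Phi$ globally permutes the edge labels of $\Gamma$, meaning that the label of the image of an edge depends only on the label of the edge itself. These two properties are closed under composition, so any composition of left-multiplication and diagram automorphisms must globally permute the edge labels of $\Gamma$.

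First I would make that closure under composition explicit: if $L_w$ fixes every label and $\Phi$ carries each edge of label $s_i$ to an edge of label $\sigma(s_i)$ for a fixed permutation $\sigma$ of $S$, then $L_w \circ \Phi$ and $\Phi \circ L_w$ also carry each edge of label $s_i$ to an edge of label $\sigma(s_i)$. By induction on the number of factors, any finite composition of left-multiplication and diagram automorphisms permutes the edge labels of $\Gamma$ according to a single permutation of $S$.

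Now I would assume for contradiction that $\Psi$ is such a composition. Then $\Psi$ must permute the edge labels of $\Gamma$. But Proposition \ref{prop:notpermute} has just exhibited two edges $e_1$ and $e_2$, both labelled by $t$, whose images under $\Psi$ carry the distinct labels $\phi(t)$ and $t$. This contradicts the conclusion of the previous paragraph, so no such decomposition of $\Psi$ exists.

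There is essentially no obstacle beyond unwinding definitions; the content of the corollary was already packed into Proposition \ref{prop:notpermute}, and this statement is just the routine but necessary translation from ``$\Psi$ does not permute labels'' to ``$\Psi$ is not a product of left-multiplication and diagram automorphisms''. The only mildly subtle point is to state clearly that Lemma \ref{lem:permutelabels} gives a \emph{global} permutation of $S$ (not just a local permutation at each vertex), which is what allows the composition argument to go through.
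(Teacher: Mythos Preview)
Your proposal is correct and follows essentially the same route as the paper: the paper leaves this corollary unproved because it already noted at the start of Section~\ref{sec:flexible} that, by Corollary~\ref{cor:samelabels}, it suffices to produce an automorphism sending two equally-labelled edges to differently-labelled ones, which is exactly Proposition~\ref{prop:notpermute}. Your argument just unpacks Corollary~\ref{cor:samelabels} into its constituent Lemmas~\ref{lem:fixlabels} and~\ref{lem:permutelabels} and makes the closure-under-composition step explicit, which is a harmless (and arguably clearer) elaboration of the same idea.
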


Hence, for any Coxeter system $(W,S)$ whose defining diagram is flexible, we have constructed an automorphism of $\Gamma$ which is not the composition of left-multiplication and diagram automorphisms. This completes the proof of Theorem \ref{the:maingen}.

\subsection{Nondiscreteness of the automorphism group}
\label{sec:discreteness}

In the previous section, we considered an automorphism $\phi$ of the (flexible) defining diagram which fixes $s \in S$ and each $t \in S$ with $m_{st}$ finite. From this, we constructed an automorphism $\Psi$ of $\Gamma(W,S)$ by applying $\phi$ to each letter before the first $s$ in an arbitrary reduced word. We will now construct an infinite family of automorphisms $\Psi_1, \Psi_2, \dotsc, \Psi_n, \dotsc$ which fix the identity in order to show that $\Aut(\Gamma)$ is nondiscrete. As discussed in \cite{Survey}, the automorphism group is discrete only if the stabiliser of the identity is finite.

For $n = 1, 2, \dotsc$, let $\Psi_n$ be defined similarly to $\Psi$, except that rather than changing some letters at the beginning of the word and fixing the remainder, $\Psi_n$ leaves every letter before the $n$th $s$ unchanged, and applies $\phi$ to the remainder. If there are fewer than $n$ occurences of $s$, then $\Psi_n$ fixes the word. The proof of the following proposition is almost identical to that of Proposition \ref{prop:alphaaut}. 

\begin{Proposition}
The $\Psi_n$ are well defined automorphisms of $\Gamma(W,S)$.
\end{Proposition}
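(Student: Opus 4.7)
The plan is to mirror the proof of Proposition~\ref{prop:alphaaut} essentially verbatim, with the role of the ``first $s$'' replaced by the ``$n$th $s$.'' Concretely, I would verify in turn: ($A$) that $\Psi_n$ carries reduced words to reduced words; ($B$) that if two reduced words define the same element of $W$, so do their $\Psi_n$-images; ($C$) that $\Psi_n$ is a bijection on $W$; and ($D$) that $\Psi_n$ preserves adjacency in $\Gamma$. Bijectivity is immediate from $\Psi_{n,\phi} \circ \Psi_{n,\phi^{-1}} = \mathrm{id}$, and ($D$) follows by inspection, since adjacent vertices correspond to reduced expressions differing only in a terminal letter on which $\Psi_n$ either acts trivially or by $\phi$.

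The substantive content is a version of Proposition~\ref{prop:comdel} for $\Psi_n$: given an $m$-operation or $tt$-deletion taking a reduced word $\mathbf{w}$ to a word $\mathbf{w'}$, there is a sequence of $m$-operations and $tt$-deletions taking $\Psi_n(\mathbf{w})$ to $\Psi_n(\mathbf{w'})$. Fix an $m$-operation acting on a substring $\sigma$ of $\mathbf{w}$; there are three exhaustive cases:
\begin{enumerate}[label=(\roman*)]
\item $\sigma$ lies strictly before the $n$th $s$ of $\mathbf{w}$. Here $\Psi_n$ is the identity on $\sigma$, so the same $m$-operation applied to $\Psi_n(\mathbf{w})$ produces $\Psi_n(\mathbf{w'})$.
\item $\sigma$ lies strictly after the $n$th $s$. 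Here $\Psi_n$ applies $\phi$ to $\sigma$, and Proposition~\ref{prop:comdel} supplies a $\phi$-conjugate $m$-operation transforming $\Psi_n(\mathbf{w})$ into $\Psi_n(\mathbf{w'})$.
\item $\sigma$ contains the $n$th $s$. Then $\sigma$ is made up of alternating $s$ and some $t$ with $m_{st}$ finite, and by the defining property of $\phi$ every letter of $\sigma$ is fixed by $\phi$. Consequently $\Psi_n$ acts trivially on $\sigma$, and the same $m$-operation carries $\Psi_n(\mathbf{w})$ to $\Psi_n(\mathbf{w'})$.
\end{enumerate}
A $tt$-deletion splits into the same three cases and is handled identically.

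With this commutativity in hand, ($A$) follows by the contradiction argument of Proposition~\ref{prop:alphareduced}: any shortening of $\Psi_n(\mathbf{w})$ would pull back to a shortening of $\mathbf{w}$, contradicting reducedness. Step ($B$) follows from Tits' Theorem~\ref{thm:wordprob} by induction on the length of an $m$-operation sequence connecting two reduced expressions for the same group element, each step lifting along the commutativity above.

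The main technical point is case~(iii), where an $m$-operation on a substring containing the $n$th $s$ can alter the count of $s$'s (when $m_{st}$ is odd) and thereby shift the positional index ``$n$th $s$'' within the transformed word. The resolution, as in Proposition~\ref{prop:alphaaut}, is that every letter in such a substring is fixed by $\phi$, so $\Psi_n$ produces the same output regardless of where the boundary between the ``fixed'' and ``$\phi$-applied'' portions of the word is drawn. Once that observation is granted, the remainder of the verification is routine bookkeeping identical to the $n = 1$ case.
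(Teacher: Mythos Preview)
Your approach is exactly the paper's (which says only that the proof is ``almost identical to that of Proposition~\ref{prop:alphaaut}''), and you have correctly isolated the one genuinely new issue: an $m$-operation involving $s$ can change the number of occurrences of $s$ when $m_{st}$ is odd, and thereby shift the position of the ``$n$th~$s$.'' But your proposed resolution does not go far enough. You argue that the shift is harmless because the boundary stays inside the substring $\sigma$, where every letter is $\phi$-fixed. In fact the boundary can jump entirely outside $\sigma$, across letters that $\phi$ moves --- and this breaks both your case~(iii) and your case~(i).

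Here is a concrete failure. Take $S=\{s,t,u,v\}$ with $m_{st}=3$ and all other $m_{ij}=\infty$, and let $\phi$ swap $u\leftrightarrow v$ while fixing $s,t$; the defining diagram is flexible at $s$ via this $\phi$. For $n=2$, the reduced words $stsu$ and $tstu$ represent the same group element (they differ by the single $m$-operation $sts\mapsto tst$). Yet $\Psi_2(stsu)=st\cdot\phi(su)=stsv$, while $\Psi_2(tstu)=tstu$ since $tstu$ contains only one $s$. As $stsv=tstv\neq tstu$ in $W$, the map $\Psi_2$ is not well-defined on group elements. The same phenomenon defeats your case~(i): with $n=2$ and the pair $tstus$, $stsus$, the $m$-operation on $\sigma=tst$ lies strictly before the second $s$ of $tstus$, but after the operation the second $s$ has moved from position~$5$ to position~$3$, so the boundary crosses the letter $u$. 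Thus, as literally defined, the $\Psi_n$ need not be well-defined whenever some $m_{st}$ with $t$ adjacent to $s$ is odd; the construction (both in the paper's sketch and in your write-up) requires replacing the positional ``$n$th~$s$'' by something invariant under $m$-operations.
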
 

Take any element $t \in S$ such that $\phi(t) \neq t$. Then $m_{st} = \infty$, so $st, (st)^2, (st)^3, \dotsc$ are all distinct. For each $n$, $\Psi_n$ fixes exactly those $(st)^k$ with $k \leq n$, so no two of the $\Psi_n$ are equal. Thus we have constructed an infinite family of automorphisms of $\Gamma(W,S)$ which fix the identity vertex. Hence this vertex has an infinite stabiliser, and so the automorphism group is nondiscrete when the defining diagram is flexible. This proves Theorem \ref{the:maindiscrete} in the case where the defining diagram is flexible. 

If the defining diagram is not flexible, then by Corollary \ref{cor:labelpermute} and Proposition \ref{prop:isdiagram}, any automorphism of $\Gamma(W,S)$ which fixes the identity vertex induces an automorphism of the defining diagram. The defining diagram is finite, so has finitely many automorphisms. Thus the stabiliser of the identity vertex is finite, so the automorphism group of $\Gamma(W,S)$ is discrete, completing the proof of Theorem \ref{the:maindiscrete}.

\subsection{The automorphism group as a semidirect product}
\label{sec:semidirect}

Given a Coxeter system $(W,S)$ whose defining diagram is flexible, Theorem \ref{the:maingen} gives us that any element of $\Aut(\Gamma)$ can be written as the composition of an element of $L$ and an element of $D$, where $L$ and $D$ are the subgroups of left-multiplication and diagram automorphisms of $\Aut(\Gamma)$, respectively. Note that $L$ is isomorphic to $W$.

The automorphism group of $\Gamma(W,S)$ always has $W \rtimes D$ as a subgroup. Theorem \ref{the:maingen} thus implies that $\Aut(\Gamma) \cong W \rtimes D$ exactly when the defining diagram is not flexible, proving Corollary \ref{cor:mainsemidirect}.

\subsection{The automorphism group of the Davis complex}
\label{sec:Davis}

In this section, we prove the following proposition, which implies that Theorem $\ref{the:main}$ and Corollary $\ref{cor:mainsemidirect}$ are true of the Davis complex, as well as of the Cayley graph. 

\begin{Proposition}
\label{prop:autiso}
Let $\Sigma = \Sigma(W,S)$ be the Davis complex of a Coxeter system $(W,S)$. Then the automorphism groups $\Aut(\Gamma)$ and $\Aut(\Sigma)$ are isomorphic.
\end{Proposition}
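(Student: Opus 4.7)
The plan is to exhibit the isomorphism via restriction to the 1-skeleton. By construction in \cite{Davis}, the Davis complex $\Sigma$ is a CW complex whose 1-skeleton is canonically identified with $\Gamma$, and whose cells of dimension $k \geq 2$ are indexed by pairs $(wW_T, T)$ with $T \subseteq S$ a \emph{spherical} subset of rank $k$ (i.e.\ $W_T$ is finite and $|T| = k$). Each such cell is attached to the 1-skeleton along the full induced subgraph on $wW_T$, which is itself the Cayley graph of the finite Coxeter group $W_T$.

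Any $F \in \Aut(\Sigma)$ preserves cell dimensions, so restricts to a graph automorphism of the 1-skeleton, defining a homomorphism $r \colon \Aut(\Sigma) \to \Aut(\Gamma)$. Injectivity is immediate, since each higher-dimensional cell of $\Sigma$ is determined by its attaching data and hence $F$ is determined by its restriction. For surjectivity, given $\Phi \in \Aut(\Gamma)$, I would show that $\Phi$ permutes spherical cosets with rank preserved and respects inclusions among them, after which $\Phi$ extends uniquely cell-by-cell to a cellular automorphism of $\Sigma$. The key claim is that a finite subset $C \subseteq W$ is of the form $wW_T$ for spherical $T$ of rank $k$ if and only if the induced subgraph of $\Gamma$ on $C$ is isomorphic as an unlabelled graph to the Cayley graph of a rank-$k$ finite Coxeter group; this is a purely graph-theoretic property preserved by $\Phi$, and face inclusions correspond to containments of such subgraphs.

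The main obstacle is pinning down this graph-theoretic characterisation and checking compatibility with face inclusions cleanly. A more concrete alternative is to invoke Theorem \ref{the:main} and verify the extension case by case: when the defining diagram is not flexible, every $\Phi \in \Aut(\Gamma)$ is a composition of a left-multiplication $L_w$ (which extends to $\Sigma$ by $xW_T \mapsto wxW_T$) and a diagram automorphism $\sigma$ (which extends via $xW_T \mapsto \sigma(x)W_{\sigma(T)}$); when the defining diagram is flexible, it remains to check that the automorphisms $\Psi$ constructed in Sections \ref{sec:flexible} and \ref{sec:discreteness} carry spherical cosets to spherical cosets, which follows by a direct calculation from Definition \ref{def:interestingaut} using that $\phi$ is an automorphism of the defining diagram fixing every $t \in S$ with $m_{st}$ finite (so in particular fixing every element of any spherical $T \ni s$). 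Once preservation of spherical cosets and their inclusions is established, the extension of $\Phi$ to $\Sigma$ is forced and is manifestly an inverse to $r$.
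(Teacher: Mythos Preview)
Your restriction map $r\colon \Aut(\Sigma)\to\Aut(\Gamma)$ and its injectivity are fine, and the overall strategy of extending a graph automorphism over higher cells is reasonable. But both routes you offer for surjectivity have problems.

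The ``concrete alternative'' via Theorem~\ref{the:main} has a genuine gap in the flexible case. Theorem~\ref{the:maingen} only tells you that \emph{some} automorphisms are not compositions of left-multiplication and diagram automorphisms; it does not say that $L_w$, diagram automorphisms, and the particular $\Psi$'s of Sections~\ref{sec:flexible}--\ref{sec:discreteness} together generate $\Aut(\Gamma)$. So verifying that those specific $\Psi$'s preserve spherical cosets proves nothing about an arbitrary $\Phi\in\Aut(\Gamma)$.

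Your first route is closer to workable, but the ``key claim''---that $C\subseteq W$ equals $wW_T$ for spherical $T$ of rank $k$ iff the induced subgraph on $C$ is abstractly isomorphic to the Cayley graph of some rank-$k$ finite Coxeter group---is both unproven and not obviously true as stated; you yourself flag it as the main obstacle. What you actually need is that $\Phi$ sends spherical cosets to spherical cosets, and for this the essential-cycle machinery of Section~\ref{sec:essential} already does most of the work in rank~$2$ (Proposition~\ref{prop:essential}); the higher-rank step would still require an argument.

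The paper avoids all of this by using a different model of $\Sigma$: not the Coxeter-cell complex with $1$-skeleton $\Gamma$, but the chamber model, in which $\Sigma$ is assembled from copies of a single chamber $K(W,S)$, one per vertex of $\Gamma$, glued along mirrors as dictated by the edge labels. The point is then that an arbitrary $\Phi\in\Aut(\Gamma)$ determines, at every vertex $w$, a local permutation $\phi_w$ of $S$, and Proposition~\ref{prop:diagramstar} (after conjugating by left-multiplications) shows each $\phi_w$ is an automorphism of the Coxeter diagram. Lemma~\ref{lem:chamberauto} then identifies diagram automorphisms with automorphisms of the chamber $K(W,S)$, so $\Phi$ extends chamber-by-chamber to a map on $\Sigma$. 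This sidesteps any need to recognise spherical cosets graph-theoretically and works uniformly in the flexible and non-flexible cases.
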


We prove this proposition in a sequence of lemmas, showing how each of the Cayley graph and the Davis complex can be constructed from the other. 

Firstly, note that from the Davis complex $\Sigma$, we can construct a graph $\Gamma'$ as follows. For each chamber of $\Sigma$, $\Gamma'$ has a vertex. For each pair of chambers of $\Sigma$ which are joined along the mirror of type $s \in S$, the graph $\Gamma'$ has an edge between the corresponding two vertices, which is labelled by $s$. This graph $\Gamma'$ is just the Cayley graph $\Gamma$, from the definition of $\Sigma$. Likewise, we could construct the Davis complex $\Sigma(W,S)$ from the Cayley graph $\Gamma$, by taking a chamber for each vertex of $\Gamma$ and joining the the chambers corresponding to the vertices $w_1$ and $w_2$ along the mirror corresponding to $s \in S$ if and only if $w_1$ and $w_2$ are joined by an edge labelled by $s$ in $\Gamma$. 

\begin{Lemma}
\label{lem:sigmacay}
Any automorphism of $\Sigma$ induces an automorphism of $\Gamma$. 
\end{Lemma}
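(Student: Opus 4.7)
The plan is to exploit the explicit description of $\Gamma$ in terms of $\Sigma$ given in the paragraph just preceding the lemma. Since the vertex set of $\Gamma$ is the set of chambers of $\Sigma$ and the edges of $\Gamma$ record precisely the mirror-gluing relation between chambers, it suffices to show that any automorphism $\Phi$ of $\Sigma$ permutes the chambers of $\Sigma$ and takes mirror-adjacent pairs of chambers to mirror-adjacent pairs. This will produce a bijection on the vertex set of $\Gamma$ which sends edges to edges, hence an automorphism of $\Gamma$ (recalling that automorphisms of $\Gamma$ need not preserve edge labels).

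First I would note that the chambers of $\Sigma$ are intrinsically determined by the structure of $\Sigma$, for instance as the maximal cells of a fixed dimension (the translates of the fundamental Davis chamber $K$). Since a cellular automorphism $\Phi$ of $\Sigma$ permutes cells of each dimension, it restricts to a bijection on the set of chambers. Composing with the identification of chambers with elements of $W$, this yields a bijection $\bar\Phi \colon W \to W$ on the vertex set of $\Gamma$.

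Next I would observe that the relation ``two chambers are glued along a mirror'' is also intrinsic to $\Sigma$: two distinct chambers are mirror-adjacent exactly when their intersection is a codimension-one cell (a mirror). This condition is preserved by $\Phi$, so $\Phi$ takes any mirror-adjacent pair of chambers to a mirror-adjacent pair (though not necessarily one of the same type $s$, as we do not demand label preservation). By the description of $\Gamma$ in terms of $\Sigma$, this means $\bar\Phi$ takes edges of $\Gamma$ to edges of $\Gamma$, and therefore defines a graph automorphism. The map $\Phi \mapsto \bar\Phi$ is the claimed induced map.

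The main obstacle is a bookkeeping one rather than a conceptual one: making sure that chambers and mirror-adjacency can really be recovered from the bare cellular/geometric structure of $\Sigma$, independently of the $W$-action or of the labelling by elements of $S$. Once this intrinsic characterisation is in place (which is standard for the Davis complex, see \cite{Davis}), the lemma reduces to unwinding definitions as above.
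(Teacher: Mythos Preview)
Your proposal is correct and follows essentially the same approach as the paper: the paper's proof simply asserts that an automorphism of $\Sigma$ ``must take chambers to chambers and preserve adjacency'' and therefore induces a graph automorphism of $\Gamma$. Your version is a more careful elaboration of the same argument, explicitly justifying why chambers and mirror-adjacency are intrinsic to $\Sigma$ rather than taking this for granted.
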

\begin{proof}
An automorphism of $\Sigma$ must take chambers to chambers and preserve adjacency, so induces a mapping on the vertices of $\Gamma$ which preserves adjacency, which are the conditions required to be a graph automorphism. Thus any automorphism of $\Sigma$ induces an automorphism of $\Gamma$.
\end{proof}

\begin{Lemma}
\label{lem:chamberauto}
The automorphism groups of the chamber $K(W,S)$ and the Coxeter diagram of $(W,S)$ are isomorphic.
\end{Lemma}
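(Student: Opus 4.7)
The plan is to define mutually inverse homomorphisms between $\Aut(K)$ and the automorphism group of the Coxeter diagram of $(W,S)$. The straightforward direction sends a diagram automorphism $\sigma\colon S \to S$ to an automorphism of $K$ as follows: since $\sigma$ preserves the Coxeter matrix, it permutes the set of spherical subsets of $S$ and carries each finite parabolic subgroup $W_T$ isomorphically to $W_{\sigma(T)}$. As $K$ is constructed canonically from the poset of spherical subsets, with each spherical $T$ contributing a Coxeter cell whose structure depends only on $W_T$, the map $\sigma$ induces a chamber automorphism sending the mirror $K_s$ to $K_{\sigma(s)}$.

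For the reverse map, observe that any automorphism $\psi \in \Aut(K)$ permutes the set of mirrors $\{K_s\}_{s \in S}$, since the mirrors are canonically determined subcomplexes of $K$. This yields a permutation $\sigma_\psi$ of $S$, and I would check that $\sigma_\psi$ is a Coxeter diagram automorphism by showing $m_{\sigma_\psi(s)\sigma_\psi(t)} = m_{st}$ for each $s,t \in S$. When $m_{st}$ is finite, the local structure of $K$ near $K_s \cap K_t$ contains a Coxeter cell $C_{\{s,t\}}$ whose combinatorial type --- as a regular $2m_{st}$-gon with alternating boundary pieces lying in $K_s$ and $K_t$ --- encodes the integer $m_{st}$. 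Because $\psi$ is cellular, it carries $C_{\{s,t\}}$ to $C_{\{\sigma_\psi(s),\sigma_\psi(t)\}}$, and matching the cell types forces $m_{st} = m_{\sigma_\psi(s)\sigma_\psi(t)}$.

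Finally, I would verify that the two homomorphisms are mutually inverse. One composition is immediate from the construction, since the chamber automorphism built from $\sigma$ permutes mirrors exactly by $\sigma$. For the other composition, the key point is that a chamber automorphism $\psi$ is determined by $\sigma_\psi$, which reduces to the rigidity claim that an automorphism of $K$ fixing every mirror setwise must be the identity. I expect this rigidity step to be the main obstacle: it would be established by a cell-induction argument starting from the Coxeter cells of largest spherical subsets, using that each Coxeter cell is a specific convex polytope whose cellular automorphisms fixing all of its supporting mirrors are trivial.
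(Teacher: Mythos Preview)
Your proposal is correct and substantially more thorough than the paper's own argument. The paper dispatches the lemma in a single sentence: a permutation $\phi$ of $S$ extends to an automorphism of $K(W,S)$ if and only if $m_{st} = m_{\phi(s)\phi(t)}$ for all $s,t$, which is exactly the condition for $\phi$ to be a Coxeter-diagram automorphism. In particular, the paper silently takes for granted the rigidity step you single out as the main obstacle---that every automorphism of $K$ arises from some permutation of $S$, equivalently that an automorphism fixing each mirror setwise is the identity. Your mirror-permutation map and the cell-induction plan are the natural way to justify this, so you are filling in what the paper leaves implicit rather than taking a genuinely different route.

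One small imprecision worth flagging: the regular $2m_{st}$-gon you invoke is a Coxeter cell of the full Davis complex $\Sigma$, not of the single chamber $K$; the chamber meets that polygon only in a fundamental sector. Nonetheless $m_{st}$ is still readable from $K$ itself---for instance via the dihedral angle $\pi/m_{st}$ along $K_s \cap K_t$ in the piecewise-Euclidean metric, or from the isometry type of that sector---so your argument goes through with this adjustment. Note that this point is not merely cosmetic: if one uses only the bare simplicial structure on $K$ (as the order complex of the poset of spherical subsets), the value of $m_{st}$ is \emph{not} visible, so some metric or Coxeter-cell data really is needed for the lemma to hold.
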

\begin{proof}
A permutation $\phi$ of $S$ induces an automorphism of both $K(W,S)$ and of the Coxeter diagram of $(W,S)$ exactly when it satisfies $m_{st} = m_{\phi(s)\phi(t)}$ for all $s,t \in S$. From this, the lemma follows.
\end{proof}

\begin{Lemma}
\label{lem:caysigma}
Any automorphism of $\Gamma$ induces an automorphism of $\Sigma$. 
\end{Lemma}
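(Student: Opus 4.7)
The plan is to extend an automorphism $\Pi$ of $\Gamma$ to the Davis complex $\Sigma = (W \times K)/{\sim}$, where $K = K(W,S)$ and $(w,x) \sim (ws,x)$ whenever $x$ lies in the mirror $K_s$. At each vertex $w \in W$, the automorphism $\Pi$ determines a local permutation $\pi_w$ of $S$. Since the conjugate $L_{\Pi(w)^{-1}} \circ \Pi \circ L_w$ fixes the identity vertex, and left-multiplications preserve edge labels, this conjugate has local permutation $\pi_w$ at the identity. Proposition \ref{prop:diagramstar} then tells me that $\pi_w$ induces an automorphism of the Coxeter diagram, and by Lemma \ref{lem:chamberauto}, each $\pi_w$ lifts to a cellular automorphism of the chamber $K$ (which I again denote $\pi_w$) satisfying $\pi_w(K_s) = K_{\pi_w(s)}$ for every $s \in S$.

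I then define $\widetilde{\Pi} \colon W \times K \to W \times K$ by $\widetilde{\Pi}(w,x) = (\Pi(w), \pi_w(x))$ and claim that it descends to a cellular bijection of $\Sigma$. The only nontrivial verification is that $\widetilde{\Pi}$ respects the gluing: for $x \in K_s$, I need $(\Pi(w), \pi_w(x)) \sim (\Pi(ws), \pi_{ws}(x))$ in $\Sigma$. Since $\Pi$ sends the edge between $w$ and $ws$ to an edge labelled $\pi_w(s) = \pi_{ws}(s) =: s'$, the vertices $\Pi(w)$ and $\Pi(ws)$ are adjacent along the mirror of type $s'$, so the required equivalence reduces to checking the identity $\pi_w(x) = \pi_{ws}(x)$ on $K_s$.

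This compatibility is precisely where Lemma \ref{lem:samelabels} becomes essential: it gives $\pi_w(t) = \pi_{ws}(t)$ for every $t \in S$ with $m_{st}$ finite, and trivially $\pi_w(s) = \pi_{ws}(s)$, so the two diagram automorphisms $\pi_w$ and $\pi_{ws}$ coincide on the full subdiagram spanned by $s$ together with its neighbours. Because every spherical subset of $S$ containing $s$ lies entirely inside this subdiagram, and the cellular structure of $K_s$ is controlled by such spherical subsets, the two chamber automorphisms $\pi_w$ and $\pi_{ws}$ induce identical actions on $K_s$. This compatibility check is the main obstacle; once it is in hand, bijectivity and cellularity of $\widetilde{\Pi}$ follow by running the same construction for $\Pi^{-1}$ to produce a two-sided inverse, completing the induced automorphism of $\Sigma$.
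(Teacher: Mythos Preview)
Your proof is correct and follows the same underlying idea as the paper's --- use the correspondence between vertices of $\Gamma$ and chambers of $\Sigma$, together with Lemma~\ref{lem:chamberauto}, to promote a graph automorphism to a complex automorphism. The paper's argument is extremely terse (two sentences), whereas you actually carry out the construction $\widetilde{\Pi}(w,x) = (\Pi(w),\pi_w(x))$ and verify that it descends to the quotient. In particular, you make explicit the compatibility check on the shared mirror $K_s$ between adjacent chambers, invoking Lemma~\ref{lem:samelabels} to see that $\pi_w$ and $\pi_{ws}$ agree on $s$ and all $t$ with $m_{st}$ finite, hence on every spherical subset containing $s$, hence on $K_s$. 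This step is genuinely needed for well-definedness and is not spelled out in the paper's proof; your version is a faithful expansion of what the paper asserts.
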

\begin{proof}
Any automorphism of $\Gamma$ induces a map from $\Sigma$ to itself which takes chambers to chambers. Together with Lemma \ref{lem:chamberauto}, this implies the result.  
\end{proof}

It is easily verified that the maps of Lemmas \ref{lem:sigmacay} and \ref{lem:caysigma} are mutually inverse bijective homomorphisms, so are isomorphisms. Thus $\Aut(\Gamma)$ and $\Aut(\Sigma)$ are isomorphic, proving Proposition \ref{prop:autiso}.

Defining the automorphisms of $\Sigma(W,S)$ induced by left-multiplication and diagram automorphisms of $\Gamma$ as left-multiplication and diagram automorphisms of $\Sigma(W,S)$, respectively, Proposition \ref{prop:autiso} implies that Theorem $\ref{the:main}$ and Corollary $\ref{cor:mainsemidirect}$ are true of the Davis complex, as well as of the Cayley graph.

\bibliography{Bibliography}
\bibliographystyle{plain}

\end{document}